\definecolor{verylight}{gray}{0.97}
\definecolor{light}{gray}{0.9}
\definecolor{medium}{gray}{0.85}
\definecolor{dark}{gray}{0.6}
\def\frk{\frak}               
\newcommand{\fp}{\frak{p}}
\def\mm{{\frk m}}
\def\Phi{{\frk n}}
\def\Phi{{\frk N}}
\def\opn#1#2{\def#1{\operatorname{#2}}} 
\opn\chara{char} \opn\length{\ell} \opn\pd{pd} \opn\rk{rk}
\opn\projdim{proj\,dim} \opn\injdim{inj\,dim} \opn\rank{rank}
\opn\depth{depth} \opn\grade{grade} \opn\height{height}
\opn\embdim{emb\,dim} \opn\codim{codim}
\opn\Tr{Tr} \opn\bigrank{big\,rank}
\opn\superheight{superheight}\opn\lcm{lcm}
\opn\trdeg{tr\,deg}
\opn\reg{reg} \opn\lreg{lreg} \opn\ini{in} \opn\lpd{lpd}
\opn\size{size}\opn\bigsize{bigsize}
\opn\cosize{cosize}\opn\bigcosize{bigcosize}
\opn\sdepth{sdepth}\opn\sreg{sreg}
\opn\link{link}\opn\fdepth{fdepth}
\opn\div{div} \opn\Div{Div} \opn\cl{cl} \opn\Cl{Cl}
\opn\Spec{Spec} \opn\Supp{Supp} \opn\supp{supp} \opn\Sing{Sing}
\opn\Ass{Ass} \opn\Min{Min}\opn\Mon{Mon} \opn\dstab{dstab} \opn\astab{astab}
\opn\Syz{Syz}
\opn\Ann{Ann} \opn\Rad{Rad} \opn\Soc{Soc}
\opn\Im{Im} \opn\Ker{Ker} \opn\Coker{Coker} \opn\Am{Am}
\opn\Hom{Hom} \opn\Tor{Tor} \opn\Ext{Ext} \opn\End{End}
\opn\Aut{Aut} \opn\id{id}
\opn\nat{nat}
\opn\pff{pf}
\opn\Pf{Pf} \opn\GL{GL} \opn\SL{SL} \opn\mod{mod} \opn\ord{ord}
\opn\Gin{Gin} \opn\Hilb{Hilb}\opn\sort{sort}
\opn\initial{init}
\opn\ende{end}
\opn\height{height}
\opn\type{type}
\opn\aff{aff} \opn\con{conv} \opn\relint{relint} \opn\st{st}
\opn\lk{lk} \opn\cn{cn} \opn\core{core} \opn\vol{vol}
\opn\link{link} \opn\star{star}\opn\lex{lex}
\opn\gr{gr}
\def\pot#1#2{#1[\kern-0.28ex[#2]\kern-0.28ex]}
\opn\dirlim{\underrightarrow{\lim}}
\opn\inivlim{\underleftarrow{\lim}}
\let\to=\rightarrow
\def\Implies{\ifmmode\Longrightarrow \else
        \unskip${}\Longrightarrow{}$\ignorespaces\fi}
\def\implies{\ifmmode\Rightarrow \else
        \unskip${}\Rightarrow{}$\ignorespaces\fi}
\def\iff{\ifmmode\Longleftrightarrow \else
        \unskip${}\Longleftrightarrow{}$\ignorespaces\fi}
\newtheorem{Theorem}{Theorem}[section]
 \newtheorem{Lemma}[Theorem]{Lemma}
 \newtheorem{Corollary}[Theorem]{Corollary}
 \newtheorem{Proposition}[Theorem]{Proposition}
 \newtheorem{Remark}[Theorem]{Remark}
 \newtheorem{Example}[Theorem]{Example}
 \newtheorem{Definition}[Theorem]{Definition}
\let\epsilon\varepsilon
\let\kappa=\varkappa
\def\qed{\ifhmode\textqed\fi
      \ifmmode\ifinner\quad\qedsymbol\else\dispqed\fi\fi}
\def\textqed{\unskip\nobreak\penalty50
       \hskip2em\hbox{}\nobreak\hfil\qedsymbol
       \parfillskip=0pt \finalhyphendemerits=0}
\def\dispqed{\rlap{\qquad\qedsymbol}}
\opn\dis{dis}
\def\pnt{{\raise0.5mm\hbox{\large\bf.}}}
\opn\Lex{Lex}
\begin{document}
 \title{On stability properties of powers of polymatroidal ideals}

 \author {Shokoufe Karimi and Amir Mafi*}

\address{Sh. Karimi, Department of Mathematics, University of Kurdistan, P.O. Box: 416, Sanandaj,
Iran.}
\email{Shkarimi2011@yahoo.com}

\address{A. Mafi, Department of Mathematics, University of Kurdistan, P.O. Box: 416, Sanandaj,
Iran.}
\email{a\_mafi@ipm.ir}

\subjclass[2010]{13A15, 13A30, 13C15.}

\keywords{Associated primes, polymatroidal ideal, depth stability number.\\
* Corresponding author}

\begin{abstract}
Let $R=K[x_1,...,x_n]$ be the polynomial ring in $n$ variables over a field $K$ with the maximal ideal $\frak{m}=(x_1,...,x_n)$.
 Let $\astab(I)$ and $\dstab(I)$ be the smallest integer $n$ for which $\Ass(I^n)$ and $\depth(I^n)$ stabilize, respectively.
  In this paper we show that $\astab(I)=\dstab(I)$ in the following cases:
  \begin{itemize}
\item[(i)] $I$ is a matroidal ideal and $n\leq 5$.
\item[(ii)]  $I$ is a polymatroidal ideal, $n=4$ and $\frak{m}\notin\Ass^{\infty}(I)$, where $\Ass^{\infty}(I)$ is the stable set of associated
prime ideals of $I$.
\item[(iii)] $I$ is a polymatroidal ideal of degree $2$.
\end{itemize}
Moreover, we give an example of a polymatroidal ideal for which $\astab(I)\neq\dstab(I)$. This is a counterexample to the conjecture of Herzog and Qureshi,  according to which these two numbers are the same for polymatroidal ideals.
\end{abstract}

\maketitle

\section*{Introduction}
Throughout this paper, we assume that $R=K[x_1,...,x_n]$ is the polynomial ring in $n$ variables over a field $K$ with the maximal ideal $\frak{m}=(x_1,...,x_n)$, $I$ a monomial ideal of $R$ and $G(I)$ the unique minimal monomial generators set of $I$. It is customary to denote by $\Ass(I)$ the set of associated prime ideals of $R/I$. Brodmann \cite{B1} showed that there exists an integer $k_0$ such that $\Ass(I^k)=\Ass(I^{k_0})$ for all $k\geq k_0$. The smallest such integer $k_0$ is
called the {\it index of Ass-stability} of $I$,  and denoted by $\astab(I)$. Moreover,   $\Ass(I^{k_0})$ is called the  stable set of associated
prime ideals of $I$. It is denoted by $\Ass^{\infty}(I)$. Brodmann \cite{B} also showed that there exists an integer $k_0$ such that $\depth R/I^k=\depth R/I^{k_0}$ for all $k\geq k_0$. The smallest such integer $k_0$ is called the {\it index of depth stability} of $I$ and denoted by $\dstab(I)$. This
stable depth is called the limit depth of $I$, and is denoted  by $\lim_{k\to\infty}\depth R/I^k$. Moreover, he proved that $$\lim_{k\to\infty}\depth R/I^k\leq n-\ell(I), (\dag)$$ where $\ell(I)$ is the analytic spread of $I$, that is, the dimension of $\mathcal{R}(I)/{{\frak{m}}\mathcal{R}(I)}$. Here $\mathcal{R}(I)=\oplus_{k\geq 0}I^kt^k$ is the Rees ring of $I$. Eisenbud and Huneke \cite{EH} showed that the equality $(\dag)$ holds, if the associated graded ring $gr_I(R)$ is Cohen-Macaulay.

Herzog and Mafi \cite{HM} proved that if $n=3$ then, for any graded ideal $I$ of $R$, $\astab(I)=\dstab(I)$. Also, they showed that for $n=4$ the indices $\astab(I)$ and $\dstab(I)$ are unrelated.
Herzog, Rauf and Vladoiu \cite{HRV} showed that for every polymatroidal ideal of Veronese type $\astab(I)=\dstab(I)$ and for every transversal polymatroidal ideal
$\astab(I)=1=\dstab(I)$.
Herzog and Qureshi \cite{HQ} proved that if $I$ is a polymatroidal ideal of $R$, then $\astab(I),\dstab(I)<\ell(I)$ and they conjectured that $\astab(I)=\dstab(I)$ for all polymatroidal ideal $I$.
In this paper we study this conjecture. More precisely, our main results of this paper are as follows:
\begin{Theorem}
Let one of the following cases holds:
\begin{itemize}
\item[(i)] $I$ is a matroidal ideal and $n\leq 5$.
\item[(ii)]  $I$ is a polymatroidal ideal, $n=4$ and $\frak{m}\notin\Ass^{\infty}(I)$.
\item[(iii)] $I$ is a polymatroidal ideal of degree 2.
\item[(iv)] $I$ satisfying in the strong exchange property.
\end{itemize}
Then $\astab(I)=\dstab(I)$.
\end{Theorem}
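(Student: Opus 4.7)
The plan is to treat the four cases by a common framework and some case-specific input. The common ingredients I would invoke are: (a) for polymatroidal $I$ the persistence $\Ass(I^k)\subseteq\Ass(I^{k+1})$ holds for all $k\ge 1$; (b) the depth function $k\mapsto\depth R/I^k$ is non-increasing for polymatroidal $I$ (Herzog--Hibi); (c) the Herzog--Qureshi bound $\astab(I),\dstab(I)<\ell(I)$; (d) monomial localization preserves polymatroidality, and for any monomial prime $\mathfrak{p}$ one has $\mathfrak{p}\in\Ass(I^k)\iff\depth S_{\mathfrak{p}}/I(\mathfrak{p})^k=0$; (e) the Herzog--Mafi equality $\astab=\dstab$ in $n\le 3$. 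Under (a) and (b),
\[
\dstab(I)=\min\{k:\depth R/I^k=\lim_{j\to\infty}\depth R/I^j\},\qquad \astab(I)=\max_{\mathfrak{p}\in\Ass^{\infty}(I)}\min\{k:\mathfrak{p}\in\Ass(I^k)\},
\]
and since $\mathfrak{m}\in\Ass(I^k)\iff\depth R/I^k=0$, the question reduces to comparing ``entry times'' for the various associated primes versus the ``drop time'' of $\depth R/I^k$.

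For case (iv), I would use the fact that a polymatroidal ideal with the strong exchange property is essentially a product of Veronese type ideals in disjoint variable sets, or more precisely has a very rigid local structure. In this setting one can verify by induction on $k$ that $I^{k+1}\colon x_i=I^k\cdot(I\colon x_i)$ uniformly for all $i\in\supp(I)$, so that $\Ass(I^k)=\Ass(I)$ and $\depth R/I^k=\depth R/I$ for every $k\ge 1$; hence $\astab(I)=\dstab(I)=1$. Case (iii) is similar in spirit: a polymatroidal ideal generated in degree $2$ is, up to relabeling, either a Veronese type ideal or a transversal polymatroidal ideal, both of which are already covered by the theorems of Herzog--Rauf--Vladoiu recalled in the introduction (transversal gives $\astab(I)=\dstab(I)=1$; Veronese type gives the equality directly).

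For case (ii) I would argue by monomial localization. Since $\mathfrak{m}\notin\Ass^{\infty}(I)$ and $n=4$, every $\mathfrak{p}\in\Ass^{\infty}(I)$ has height at most $3$. Using (d), for each such $\mathfrak{p}$ the ideal $I(\mathfrak{p})$ is polymatroidal in at most $3$ variables, so (e) yields $\astab(I(\mathfrak{p}))=\dstab(I(\mathfrak{p}))$. Combined with the formula from (d), the entry time of $\mathfrak{p}$ into $\Ass(I^k)$ equals $\dstab(I(\mathfrak{p}))$, since the (non-increasing) local depth $\depth S_{\mathfrak{p}}/I(\mathfrak{p})^k$ first hits zero exactly at $\dstab(I(\mathfrak{p}))$. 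Taking the max over $\mathfrak{p}$ gives $\astab(I)$. The remaining ingredient is to show that $\dstab(I)$ equals the same number; for this I would exploit that $\depth R/I^k$ is non-increasing and that, when $\mathfrak{m}\notin\Ass^{\infty}(I)$, the limit depth is forced by the deepest local contribution from a height-$3$ prime. Case (i) proceeds similarly: for $n\le 3$ apply (e) directly; for $n=4$ split on whether $\mathfrak{m}\in\Ass^{\infty}(I)$ (the ``$\notin$'' subcase is case (ii)), and for the ``$\in$'' subcase use (c), which gives $\astab(I),\dstab(I)\le 3$, together with explicit analysis of when $\mathfrak{m}$ first becomes associated. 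For $n=5$ the same strategy via monomial localization and induction on $n$ reduces everything to the $n\le 4$ case plus a separate treatment of $\mathfrak{m}\in\Ass^{\infty}(I)$.

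The main obstacle I foresee is in case (ii): while $\astab(I)$ decomposes cleanly as a maximum over monomial primes via localization, $\dstab(I)$ does not, because $\depth R/I^k$ is only indirectly controlled by the local depths $\depth R_{\mathfrak{p}}/I^k R_{\mathfrak{p}}$. Bridging this gap will require a careful argument that, under the hypothesis $\mathfrak{m}\notin\Ass^{\infty}(I)$, the global depth drops at the same step at which the last new associated prime enters. Handling the strong exchange property case (iv) is the easiest, and the degree $2$ case (iii) should be mostly a matter of verifying that the classification does not admit ``new'' polymatroidal ideals beyond the transversal and Veronese types already in the literature.
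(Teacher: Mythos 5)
Your proposal contains genuine gaps, and in two places the key claim is simply false. In case (iv), after dividing out $\gcd(I)$ a polymatroidal ideal with the strong exchange property is indeed a Veronese type ideal (this is the route the paper takes, via Herzog--Hibi--Vladoiu), but your conclusion $\astab(I)=\dstab(I)=1$ is wrong: the squarefree Veronese ideal $I=(x_1x_2x_3,x_1x_2x_4,x_1x_3x_4,x_2x_3x_4)$ satisfies the strong exchange property and has $\astab(I)=\dstab(I)=3$ (it is the paper's own Example 1.3), so the identity $I^{k+1}\colon x_i=I^k(I\colon x_i)$ cannot force $\Ass(I^k)$ and $\depth R/I^k$ to be constant. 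What is true, and what the paper uses, is only that Veronese type ideals satisfy $\astab=\dstab$ by Herzog--Rauf--Vladoiu, without either number being $1$. In case (iii) your classification is also false: the matroidal ideal $(x_1x_3,x_1x_4,x_2x_3,x_2x_4,x_3x_4)$, the edge ideal of the complete multipartite graph with parts $\{1,2\},\{3\},\{4\}$ (a rank-two matroid with a nontrivial parallel class), is generated in degree $2$ but is neither transversal (its graph is not complete bipartite) nor of Veronese type (any $I_{(2;a_1,\dots,a_4)}$ containing $x_1x_3$ and $x_2x_3$ must contain $x_1x_2$). So degree $2$ does not reduce to the two families already in the literature. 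The paper instead proves an exchange lemma in degree $2$ (Lemma \ref{L1}) to obtain $q(I)\ge n-2$, hence $\depth R/I=1$ when $\mm\notin\Ass(I)$, and combines this with the deletion formula $\Ass(I^t)\setminus\{\mm\}=\bigcup_{i}\Ass(I[i]^t)$ to settle both $\astab$ and $\dstab$ simultaneously.

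For cases (i) and (ii) you correctly identify the obstacle --- deletion/localization controls $\astab$ but not $\dstab$ --- but you do not overcome it, and that reduction is where all the work lies. The paper's resolution in case (ii) is a projective dimension dichotomy: with $\gcd(I)=1$ and $\mm\notin\Ass^{\infty}(I)$ one has $2\le\height(I)\le\pd(R/I)\le 3$; if equality holds $I$ is Cohen--Macaulay, hence squarefree Veronese by Herzog--Hibi, and otherwise $\depth R/I^t=1$ for all $t$ forces $\dstab(I)=1$, after which a delicate analysis of the deletions $I[i]$ (using that each lives in at most three variables and must be Veronese type, together with compatibilities such as $I[i][j]=I[j][i]$) shows $\astab(I)=1$. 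For $n=5$, $d=3$ the paper additionally needs Lemma \ref{L4} (a full-supported matroidal ideal of degree $n-1$ with $\gcd(I)=1$ is squarefree Veronese), Lemma \ref{L5}, and a theorem of Caviglia et al.\ to transfer $\dstab(I[i])$ back to $\dstab(I)$; none of this is supplied by, or replaceable with, the induction on $n$ you sketch. As written, only your appeal to the $n\le 3$ theorem of Herzog--Mafi and the general persistence and depth-monotonicity facts are sound.
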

In addition, we give a counterexample to {\it the conjecture of Herzog and Qureshi}.

For any unexplained notion or terminology, we refer the reader to \cite{HH1}, \cite{V}, \cite{HS} and \cite{BH1}.
Several explicit examples were  performed with help of the computer algebra systems CoCoA \cite{AB} and Macaulay2 \cite{GS},  as well as with the program in
\cite{BHR} which allows one to compute $\Ass^\infty(I)$ of a monomial ideal $I$.

\section{Preliminary}
In this section, we collect notations, terminology and basic results used in this paper.

Let, as before, $K$ be a field and $R=K[x_1,...,x_n]$ be the polynomial ring in $n$ variables over $K$ with each $\deg x_i=1$ and $\frak{m}=(x_1,...,x_n)$ the unique homogenous maximal ideal of $R$. For a monomial ideal $I$ of $R$ and $G(I)=\{u_1,...,u_t\}$, we set $\supp(I)=\cup_{i=1}^t\supp(u_i)$, where $\supp(u)=\{x_i: u=x_1^{a_1}...x_n^{a_n}, a_i\neq 0\}$ and we set $\gcd(I)=\gcd(u_1,...,u_m)$. The linear relation graph $\Gamma_I$ associated to a monomial ideal is the graph whose vertex set $V(\Gamma_I)$ is a subset of $\{x_1,...,x_n\}$ and for which $\{x_i,x_j\}\in E(\Gamma_I)$ if and only if there exist $u_k,u_l\in G(I)$ such that $x_iu_k=x_ju_l$ (see \cite[Definition 3.1]{HQ}). We say that the monomial ideal $I$ is full-supported if $\supp(I)=\{x_1,...,x_n\}$.

A monomial ideal $I$ is called a polymatroidal ideal, if it is generated in a single degree with the exchange property that for any two elements $u,v\in G(I)$ such that $\deg_{x_i}(u)>\deg_{x_i}(v)$ for some $i$, there exists an integer $j$ such that $\deg_{x_j}(u)<\deg_{x_j}(v)$ and $x_j(u/x_i)\in I$.
The polymatroidal ideal $I$ is called matroidal if $I$ is generated by square-free monomials. For a polymatroidal ideal $I$ one can compute the analytic spread as $\ell(I)=r-s+1$, where $r=|V(\Gamma_I)|$ and $s$ is the number of connected components of $\Gamma_I$ (see \cite[Lemma 4.2]{HQ}).

The product of polymatroidal ideals is again polymatroidal (see \cite[Theorem 5.3]{CH}). In particular each power of a polymatroidal ideal is polymatroidal. Also, $I$ is a polymatroidal ideal if and only if $(I:u)$ is a polymatroidal ideal for all monomial $u$ (see \cite[Theorem 1.1]{BH}).
According to \cite{HQ} and \cite{HRV}, every polymatroidal ideal satisfying the persistence property and non-increasing depth functions, that is, if $I$ is a polymatroidal ideal then, for all $k$, there is the following sequences: \\

$\Ass(I^k)\subseteq\Ass(I^{k+1})$
and
$\depth(R/I^{k+1})\leq\depth(R/I^k)$.\\

In addition, every polymatroidal ideal is a normal ideal (see \cite[Theorem 3.4]{HRV}) and consequently the Rees ring $\mathcal{R}(I)$ and also the associated graded ring $gr_I(R)$ is Cohen-Macaulay (see \cite[Theorem 7.2.35]{V}). In particular, the equality $(\dag)$ holds.

Herzog and Vladoiu \cite{HV} proved the following interesting results about matroidal ideals.
\begin{Theorem} Let $I$ be a matroidal ideal of $R$ generated in degree $d$, and denote as before by $s$ the number of connected components of $\Gamma_I$.
\begin{itemize}
\item[(i)] Then $s\leq d$. If in addition $I$ is full-supported and $\gcd(I)=1$, then $V(\Gamma_I)=\{x_1,...,x_n\}$ and $s=d$ if and only if $\dstab(I)=1$.
\item[(ii)] If $I$ is full-supported and $\gcd(I)=1$, then $I\subseteq {\frak{p}_1}\cap\cdots\cap{\frak{p}_s},$ where $\frak{p}_1,\ldots,{\frak{p}_s}$ are the monomial prime ideals generated by the sets of vertices of the connected components $ \Gamma_1,\ldots,\Gamma_s$ of $\Gamma_I$.
\item[(iii)] If $I$ is full-supported and $\gcd(I)=1$, then $\dstab(I)=1$ if and only if $I={\frak{p}_1}\cdots{\frak{p}_d}$, where ${\frak{p}_1},\cdots,{\frak{p}_d}$ are monomial prime ideals  in pairwise disjoint sets of variables.
\end{itemize}
\end{Theorem}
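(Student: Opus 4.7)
\emph{Strategy.} Two structural tools drive the proof. First, the matroidal exchange property makes the generating set $G(I)$ \emph{swap-connected}: for any $u,v\in G(I)$ there is a chain $u=u_0,u_1,\ldots,u_m=v$ with $u_{l+1}=x_{j_l}(u_l/x_{i_l})\in G(I)$, and each such swap manufactures an edge $\{x_{i_l},x_{j_l}\}\in E(\Gamma_I)$. Second, \cite[Lemma~4.2]{HQ} gives $\ell(I)=r-s+1$, and normality of $I$ together with $(\dag)$ yields $\lim_{k\to\infty}\depth R/I^k=n-\ell(I)$; these hook $s$ directly onto stable depth data. Throughout, $d$ denotes the generating degree of $I$.

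\emph{Parts (i) and (ii).} Fix $u_0\in G(I)$, so $|\supp(u_0)|=d$. For any $x\in V(\Gamma_I)$, an incident edge furnishes some $v\in G(I)$ with $x\in\supp(v)$; a swap-chain from $u_0$ to $v$, together with an induction on the chain length (at each step either $x$ was already present in the previous generator, or the swap-edge of that step connects $x$ to a variable of that previous generator), produces a walk in $\Gamma_I$ from some variable of $\supp(u_0)$ to $x$. Hence every component of $\Gamma_I$ meets $\supp(u_0)$, proving $s\leq d$. Under the standing hypotheses of (i), since $\gcd(I)=1$ and $I$ is full-supported, every $x_i$ lies in some generator and not in another; the exchange property then places $x_i$ on an edge, giving $V(\Gamma_I)=\{x_1,\ldots,x_n\}$. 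For (ii), suppose $u\in G(I)$ satisfies $\supp(u)\cap V(\Gamma_i)=\emptyset$ and take a swap-chain from $u$ to a generator meeting $V(\Gamma_i)$; at the first step where a variable of $V(\Gamma_i)$ enters, its swap-edge lies inside $\Gamma_i$, forcing the previous generator to have met $V(\Gamma_i)$ already --- a contradiction. Thus $u\in\mathfrak{p}_i$ for every $i$.

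\emph{Part (iii) and the equivalence in (i).} For the direction $(\Leftarrow)$ of (iii), when $I=\mathfrak{p}_1\cdots\mathfrak{p}_d$ with pairwise disjoint vertex sets $V_1,\ldots,V_d$, then $I^k=\mathfrak{p}_1^k\cdots\mathfrak{p}_d^k$ splits across disjoint sets of variables; each $R/\mathfrak{p}_i^k$ is Cohen-Macaulay of constant dimension, and a short Stanley-Reisner computation gives $\depth R/I^k=d-1$ for every $k$, so $\dstab(I)=1$. For $(\Rightarrow)$, part (ii) combined with disjointness of the $V(\Gamma_i)$ yields $I\subseteq\mathfrak{p}_1\cap\cdots\cap\mathfrak{p}_s=\mathfrak{p}_1\cdots\mathfrak{p}_s$, and since the right-hand product is generated in degree $s$ one gets $s\leq d$. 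The hypothesis $\dstab(I)=1$ with $\lim_{k\to\infty}\depth R/I^k=n-\ell(I)=s-1$ pins down $\depth R/I=s-1$; a comparison against the Stanley-Reisner depth of $R/(\mathfrak{p}_1\cdots\mathfrak{p}_s)$, together with the matroidal exchange property applied \emph{inside} each component $\Gamma_i$ (letting one swap any variable of $V(\Gamma_i)$ for any other while staying in $I$), forces $s=d$ and $I=\mathfrak{p}_1\cdots\mathfrak{p}_d$. The equivalence $s=d\iff\dstab(I)=1$ in (i) is then an immediate rephrasing of (iii).

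\emph{Main obstacle.} The forward direction of (iii) is where the real work lies: depth stability at the first power must be parlayed into the conclusion that \emph{every} admissible squarefree monomial $x_{i_1}\cdots x_{i_d}$ with $x_{i_k}\in V(\Gamma_k)$ already belongs to $I$, rather than just that $I$ sits inside $\mathfrak{p}_1\cap\cdots\cap\mathfrak{p}_s$. This step requires both the Stanley-Reisner depth bookkeeping and a careful iterated exchange argument that exploits the connectedness of each $\Gamma_i$ to transport variables freely within a single component while preserving membership in $I$.
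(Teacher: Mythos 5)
The paper never proves this statement at all: it is quoted from Herzog--Vladoiu \cite{HV}, so your proposal can only be judged on its own terms and against the standard route in the literature. On that score, your treatment of the first half of (i) and of (ii) is correct: the swap-chain induction (connectivity of the basis-exchange graph of a matroid, with each swap producing an edge of $\Gamma_I$) does prove $s\leq d$, gives $V(\Gamma_I)=\{x_1,\dots,x_n\}$ under the hypotheses, and yields $I\subseteq \frak{p}_1\cap\cdots\cap\frak{p}_s$. The direction $(\Leftarrow)$ of (iii) is also essentially fine --- it is the fact, quoted in the paper from \cite{HRV}, that transversal polymatroidal ideals have $\dstab(I)=1$ --- though your ``short Stanley-Reisner computation'' of $\depth R/I^k$ is loose, since $I^k$ is not squarefree for $k\geq 2$; the clean route is that the depth function is non-increasing with limit $n-\ell(I)=d-1$ by normality and $(\dag)$, while $\depth R/I=d-1$.

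The forward direction of (iii) --- which you yourself flag as ``where the real work lies'' --- is not actually proved, and both steps you lean on are defective. First, to extract $s=d$ from $\dstab(I)=1$ you need the value of $\depth R/I$, and your ``comparison against the Stanley-Reisner depth of $R/(\frak{p}_1\cdots\frak{p}_s)$'' cannot supply it: depth is not monotone along an inclusion $I\subseteq\frak{p}_1\cdots\frak{p}_s$. The missing ingredient is Chiang-Hsieh's theorem, quoted in the paper as \cite[Theorem 2.5]{C}, that $\depth R/I=d-1$ for every full-supported matroidal ideal of degree $d$; combined with $\depth R/I=\lim_{k\to\infty}\depth R/I^k=n-\ell(I)=s-1$, this gives $s=d$ at once (and already settles the equivalence in (i), independently of the structural claim). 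Second, your assertion that the exchange property lets one ``swap any variable of $V(\Gamma_i)$ for any other while staying in $I$'' is precisely the conclusion to be established, not an application of the exchange axiom: adjacency of $x,y$ in $\Gamma_i$ only says that \emph{some} pair of generators differ by that swap, not that \emph{every} generator containing $x$ admits it. A genuine argument is needed, for instance: once $s=d$, part (ii) forces every generator to be a transversal of $V(\Gamma_1),\dots,V(\Gamma_d)$; then for $u\in G(I)$ with $\{x\}=\supp(u)\cap V(\Gamma_i)$ and any $y\in V(\Gamma_i)\setminus\supp(u)$, the fundamental circuit of $y$ with respect to the basis $\supp(u)$ contains no variable of $V(\Gamma_j)$ for $j\neq i$ (exchanging such a variable for $y$ would produce a non-transversal basis), and it is not $\{y\}$ alone since $y$ lies in some generator; hence the circuit is $\{x,y\}$ and $y(u/x)\in G(I)$. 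Iterating over the components shows every transversal belongs to $G(I)$, i.e.\ $I=\frak{p}_1\cdots\frak{p}_d$. Without this (or an equivalent) argument, and without Chiang-Hsieh's depth formula, your proof of (iii) and of the equivalence in (i) is a restatement of what must be shown.
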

From Theorem 1.1 (iii) one can conclude that for all full-supported matroidal ideal with $\gcd(I)=1$ if $\dstab(I)=1$, then $\astab(I)=1$.

Herzog and Qureshi \cite{HQ} proved that if $I$ is a polymatroidal ideal of $R$, then $\astab(I),\dstab(I)<\ell(I)$ and it is well know that $\height(I)\leq \ell(I)$. One can ask whether $\astab(I),\dstab(I)\leq\height(I)$ for all polymatroidal ideals.
We give two examples to show that this does not hold.

\begin{Example}
Let $n=3$ and consider the polymatroid ideal $I=(x_1^3x_2x_3,x_1^2x_2^2x_3,x_1^3x_2^2)$. Then $\height(I)=1$ but $\astab(I)=\dstab(I)=2$.
\end{Example}

\begin{Example}
Let $n=4$ and consider the Veronese type ideal\\
$I=(x_1x_2x_3,x_1x_2x_4,x_1x_3x_4,x_2x_3x_4)$. Then $\gcd(I)=1$ and $\height(I)=2$ but $\astab(I)=\dstab(I)=3$.
\end{Example}

\section{The results}

We start this section by the following lemma.
\begin{Lemma}\label{L1}
Let $I\subset R$ be a full-supported polymatroidal ideal of degree $2$. Assume that $y_1$ and $y_2$ are variables in $R$ such that $u=y_1y_2\in G(I)$ and $y\notin\supp(u)$. Then $(u/y_1)y\in G(I)$ or $(u/y_2)y\in G(I)$.
\end{Lemma}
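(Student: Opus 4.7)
The plan is to exploit the full-support hypothesis to pick a generator of $I$ containing $y$, and then transport $u$ toward it via the exchange property. Since $\supp(I)=\{x_1,\ldots,x_n\}$, there exists some $v\in G(I)$ with $y\in\supp(v)$, and because $\deg v=2$ we may write $v=y\cdot t$ for some variable $t$ (possibly $t=y$).

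First, I would dispose of the trivial case $t\in\{y_1,y_2\}$: here $v$ itself equals $(u/y_2)\,y$ or $(u/y_1)\,y$, so the conclusion is immediate. Assume from now on that $t\notin\{y_1,y_2\}$. I then select an index at which $v$ strictly dominates $u$ and apply the polymatroidal exchange in the direction $v\to u$. If $t=y$ (so $v=y^2$), then $\deg_y(v)=2>0=\deg_y(u)$; otherwise $t\notin\{y,y_1,y_2\}$, and $\deg_t(v)=1>0=\deg_t(u)$. In either subcase the exchange property produces a variable $x_j$ satisfying $\deg_{x_j}(v)<\deg_{x_j}(u)$ together with $x_j\,(v/y)\in G(I)$ or $x_j\,(v/t)\in G(I)$, both of which simplify to a degree-$2$ generator of the form $x_j\,y$.

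The only thing that really needs verification, and which is the main (mild) obstacle, is that the exchanged index $x_j$ must belong to $\supp(u)=\{y_1,y_2\}$. This is forced by the inequality $\deg_{x_j}(v)<\deg_{x_j}(u)$: outside of $\{y_1,y_2\}$ the monomial $u$ has degree zero, so $\deg_{x_j}(u)>0$ compels $x_j\in\{y_1,y_2\}$. Consequently $x_j\,y$ is either $(u/y_1)\,y$ or $(u/y_2)\,y$, which finishes the argument. The same reasoning handles the degenerate case $y_1=y_2$ (i.e.\ $u=y_1^2$) without modification, since then $\supp(u)=\{y_1\}$ and the exchange again forces $x_j=y_1$.
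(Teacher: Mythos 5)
Your proof is correct and takes essentially the same route as the paper's: use full support to find a degree-$2$ generator $v=yt$, settle the immediate case $t\in\{y_1,y_2\}$, and otherwise apply the exchange property from $v$ toward $u$ at the index $t$ (or $y$), noting that the exchanged variable $x_j$ must satisfy $\deg_{x_j}(u)>0$ and hence lie in $\{y_1,y_2\}$. The only difference is cosmetic: you make explicit the subcases $t=y$ and $y_1=y_2$, which the paper leaves implicit.
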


\begin{proof}
Since $I$ is full-supported, there exists an integer $1\leq i\leq n$ such that $v=yx_i\in G(I)$. If $x_i=y_1$ or $x_i=y_2$, then by the exchange property $(u/y_2)y\in G(I)$ or $(u/y_1)y\in G(I)$, respectively. If $x_i\neq y_1$, $y_2$, then by the exchange property $(u/y_2)y=(v/x_i)y_1\in G(I)$ or $(u/y_1)y=(v/x_i)y_2\in G(I)$. This complete the proof.
\end{proof}

In the following we recall the definition of linear quotients from \cite{HH}.
\begin{Definition}
We say that a monomial ideal $I\subset R$ has linear quotients if there is an ordering $u_1,...,u_t$ of the monomials belonging to $G(I)$ with $0<\deg u_1\leq\deg u_2\leq ...\leq\deg u_t$ such that, for each $2\leq j\leq t$, the colon ideal $(u_1,...,u_{j-1}):u_j$ is generated by a subset of $\{x_1,...,x_n\}$.
\end{Definition}

Let $I$ be a monomial ideal generated by a sequence $u_1,...,u_t$ with linear quotients. Let $q_j(I)$ denote the minimal number of linear forms generating $(u_1,...,u_{j-1}):u_j$, and $q(I)=\max\{q_j(I): 2\leq j\leq t\}$. Polymatroidal ideals have linear quotients with respect to the reverse lexicographical order of the generators, see \cite[Theorem 5.2]{CH}.

As in the proof of \cite[Corollary 1.6]{HT} the length of the minimal free resolution of $R/I$ over $R$ is equal to $q(I)+1$. Hence, by the Auslander-Buchsbaum formula, we have $\depth R/I=n-q(I)-1$. Chiang-Hsieh in \cite[Theorem 2.5]{C} proved that if $I\subset R$ is a full-supported matroidal ideal of degree $d$, then $\depth R/I=d-1$.

The following example says that the Chiang-Hsieh's theorem for polymatroidal ideals is not true .
\begin{Example}
Let $n=3$ and consider the polymatroidal ideal $I=(x_1,x_2,x_3)^2$. Then $d=2$ and $\depth R/I=0$.
\end{Example}
\begin{Lemma}\label{L2}
Let $I\subset R$ be a full-supported polymatroidal ideal in degree $2$. Then $q(I)\geq{n-2}$. In particular, if $m\notin\Ass(I)$ then $\depth R/I=1$.
\end{Lemma}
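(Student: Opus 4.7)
The plan is to exploit the fact that polymatroidal ideals have linear quotients with respect to the reverse lexicographic order of their generators (with $x_1>\cdots>x_n$), so that $\depth R/I = n - q(I) - 1$ as discussed above. It therefore suffices to exhibit a single $j$ with $q_j(I) \ge n-2$, and the natural candidate is $j=t$, where $u_t$ is the rlex-smallest generator.

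First I would locate $u_t$. Since $I$ is full-supported, some generator contains $x_n$; among degree-$2$ monomials, those containing $x_n$ are rlex-smaller than those not containing $x_n$, so $u_t$ must involve $x_n$. Write $u_t = x_a x_n$ with $a \le n$. I would then show that for every $x_i$ with $i \notin \{a,n\}$, the variable $x_i$ belongs to $(u_1,\ldots,u_{t-1}):u_t$. By Lemma~\ref{L1}, applied to $u_t$ and to $y=x_i$, at least one of $x_n x_i$ or $x_a x_i$ lies in $G(I)$. If $i<a$, a direct rlex comparison shows that both $x_n x_i$ and $x_a x_i$ are rlex-greater than $x_a x_n = u_t$, so the chosen generator lies in $(u_1,\ldots,u_{t-1})$ and clearly divides $x_i u_t$. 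If $a < i < n$, then $x_n x_i$ is rlex-smaller than $u_t$, which is impossible since $u_t$ is the last generator; hence Lemma~\ref{L1} forces $x_a x_i \in G(I)$, and again $x_a x_i$ is rlex-greater than $u_t$ and divides $x_i u_t$. Similarly, in the degenerate case $a=n$ (i.e.\ $u_t=x_n^{2}$), Lemma~\ref{L1} delivers $x_i x_n \in G(I)$ for every $i<n$, and each such $x_i x_n$ is rlex-above $x_n^2$, so all $n-1$ variables $x_1,\ldots,x_{n-1}$ enter the colon.

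This produces at least $n-2$ linear generators of $(u_1,\ldots,u_{t-1}):u_t$, giving $q(I) \ge q_t(I) \ge n-2$, hence $\depth R/I = n - q(I) - 1 \le 1$. For the ``in particular'' part, the hypothesis $\mm \notin \Ass(I)$ means $\depth R/I \ge 1$, so combining the two inequalities yields $\depth R/I = 1$.

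The main obstacle is the second bullet of the case analysis in Step 2: when $a<i<n$, Lemma~\ref{L1} offers two possible generators and only one of them is rlex-greater than $u_t$. The key observation that rescues the argument is that the rlex-smaller alternative $x_i x_n$ cannot lie in $G(I)$ at all, because $u_t$ was chosen to be the rlex-smallest generator; so the exchange property is forced to supply the useful generator $x_a x_i$. Everything else is a routine check of rlex inequalities between degree-$2$ monomials sharing one variable.
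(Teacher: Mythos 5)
Your proposal is correct and follows essentially the same route as the paper: take the last generator $u_t$ in the linear-quotients (reverse lexicographic) ordering and use Lemma~\ref{L1} to show every variable outside $\supp(u_t)$ lies in $(u_1,\ldots,u_{t-1}):u_t$, giving $q(I)\geq n-2$ and hence $\depth R/I\leq 1$. The only difference is that your rlex case analysis is more elaborate than necessary — since $u_t$ is the last generator, \emph{any} generator other than $u_t$ dividing $x_iu_t$ already lies in $\{u_1,\ldots,u_{t-1}\}$, so one does not need to determine which of the two alternatives Lemma~\ref{L1} actually produces.
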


\begin{proof}
Since $I$ has linear quotients, there is an ordering $u_1,...,u_t$ of the monomials belonging to $G(I)$ such that, for each $2\leq j\leq t$, the colon ideal
$(u_1,...,u_{j-1}):u_j$ is generated by a subset of $\{x_1,...,x_n\}$. It is enough to show that $\{x_1,...,x_n\}\setminus\supp(u_t)\subseteq (u_1,...,u_{t-1}):u_t$.
By Lemma \ref{L1} if $y\notin\supp(u_t)$,  then $y\in (u_1,...,u_{t-1}):u_t$. Therefore $q(I)\geq{n-2}$, as required.
\end{proof}

In the sequel we recall the following definition and remark from \cite{T}.

\begin{Definition}
Let $I\subset R$ be a monomial ideal and let $x^t[i]$ denote the monomial $x_1^{t_1}...\widehat{x_i^{t_i}}...x_n^{t_n}$, where the term $x_i^{t_i}$ of $x^t$ is omitted.  For each $i=1,...,n$, we put $I[i]=(x^t[i]: x^t\in I)$.
\end{Definition}

\begin{Remark}\label{R1} Let $I$, $J$ be a monomial ideals of $R$. For $i=1,...,n$, we have
\begin{itemize}
\item[(a)] $I[i]=IR_{x_i}\cap R$, where $R_{x_i}$ is the localization of $R$ with respect to $x_i$.
\item[(b)] $I[i]=(I:x_i^{\infty})=\cup_{t=1}^{\infty}(I:x_i^t)$.
\item[(c)] $(I:J)[i]=(I[i]:J[i])$.
\item[(d)] $I^t[i]=I[i]^t$ for all $t\geq 1$.
\item[(e)] $\Ass(I^t)\setminus\{\mm\}=\cup_{i=1}^n\Ass(I[i]^t)$ for all $t\geq 1$.
\end{itemize}
\end{Remark}

\begin{Corollary}
Let $n=3$, $I\subset R$ be a monomial ideal and ${\mm}\notin\Ass^{\infty}(I)$. Then $\astab(I)=\dstab(I)=1$.
\end{Corollary}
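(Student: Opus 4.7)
The plan is to invoke Remark~\ref{R1}(e), which gives
$$
\Ass(I^k)\setminus\{\mathfrak{m}\} \;=\; \bigcup_{i=1}^{3}\Ass(I[i]^k)
$$
for every $k\ge 1$. Since $n=3$, each $I[i]$ is generated by monomials involving only the two variables $\{x_j : j\ne i\}$, so $I[i]$ and all its powers are (extensions of) monomial ideals in a polynomial ring in \emph{two} variables.

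The first step is a short two-variable lemma: for any monomial ideal $J\subset K[y_1,y_2]$, $\Ass(J^k)=\Ass(J)$ for every $k\ge 1$. Only the two height-one monomial primes and the maximal ideal can occur in $\Ass$; the minimal primes are manifestly stable under taking powers, and the embedded occurrence of the maximal ideal is a combinatorial condition on the staircase of minimal monomial generators (equivalently, on the irredundant primary decomposition) that is preserved when the ideal is raised to a power. Applying this lemma to each $I[i]$ and substituting back gives
$$
\Ass(I^k)\setminus\{\mathfrak{m}\} \;=\; \bigcup_{i=1}^{3}\Ass(I[i]) \;=\; \Ass(I)\setminus\{\mathfrak{m}\}
$$
for all $k\ge 1$.

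Next I would use the hypothesis $\mathfrak{m}\notin\Ass^{\infty}(I)$, which together with the display yields $\Ass(I^k)=\Ass(I)\setminus\{\mathfrak{m}\}$ for all $k\gg 0$. To upgrade this to every $k\ge 1$---that is, to conclude $\astab(I)=1$---I still need $\mathfrak{m}\notin\Ass(I^k)$ at each $k\ge 1$. I would handle this by a persistence-type step: given a monomial witness $u$ with $(I^k:u)=\mathfrak{m}$ and $u\notin I^k$, multiplying $u$ by a carefully chosen generator of $I$ produces a witness for $\mathfrak{m}\in\Ass(I^{k+1})$, so any single occurrence $\mathfrak{m}\in\Ass(I^k)$ would propagate to $\mathfrak{m}\in\Ass^{\infty}(I)$, contradicting the hypothesis. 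This propagation is the main obstacle, because persistence of associated primes is notoriously subtle for general monomial ideals; however the two-variable base case together with the restriction $n=3$ provides enough rigidity for the argument to go through.

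Once $\astab(I)=1$ is established, the Herzog--Mafi identity $\astab(I)=\dstab(I)$ for graded ideals in three variables (cited in the introduction) immediately yields $\dstab(I)=1$ as well.
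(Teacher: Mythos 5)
Your first two steps are the paper's proof: it applies Remark~\ref{R1}(e) and cites \cite[Remark 1.1]{HM} for precisely your two-variable lemma, obtaining $\Ass(I^t)\setminus\{\mm\}=\bigcup_{i=1}^{3}\Ass(I[i]^t)=\bigcup_{i=1}^{3}\Ass(I[i])$ for all $t\geq 1$, and it then invokes \cite[Theorem 1.2]{HM} (the identity $\astab=\dstab$ for $n=3$) to pass from $\astab(I)=1$ to $\dstab(I)=1$, exactly as you do at the end. The genuine gap is your third step. The persistence claim you lean on --- that a witness for $\mm\in\Ass(I^k)$ can be multiplied by a generator of $I$ to give a witness for $\mm\in\Ass(I^{k+1})$, so that any occurrence of $\mm$ propagates into $\Ass^{\infty}(I)$ --- is false for monomial ideals in three variables. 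Take $I=(x_1^4,\,x_1^3x_2,\,x_1x_2^3,\,x_2^4,\,x_1^2x_2^2x_3)$ in $K[x_1,x_2,x_3]$: here $x_1^2x_2^2\notin I$ and $(I:x_1^2x_2^2)=\mm$, so $\mm\in\Ass(I)$, yet a direct check of the pairwise products shows $I^2=(x_1,x_2)^8$ and more generally $I^k=(x_1,x_2)^{4k}$ for $k\geq 2$, so $\Ass(I^k)=\{(x_1,x_2)\}$ for $k\geq 2$ and $\mm\notin\Ass^{\infty}(I)$. No rigidity coming from $n=3$ can rescue the propagation, because the propagation simply does not occur for this ideal; if $uf\notin I^{k+1}$ were achievable for some generator $f$, this example could not exist.

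The same example shows that the difficulty you flagged is not a removable technicality: it satisfies the hypotheses of the Corollary as literally stated ($n=3$, $I$ monomial, $\mm\notin\Ass^{\infty}(I)$) but has $\astab(I)=\dstab(I)=2$. The paper's own proof does not confront this point either --- it writes $\Ass(I^t)=\bigcup_{i}\Ass(I[i]^t)$, silently discarding the $\setminus\{\mm\}$ in Remark~\ref{R1}(e), which amounts to assuming $\mm\notin\Ass(I^t)$ for \emph{every} $t$. That assumption does follow from $\mm\notin\Ass^{\infty}(I)$ when $\Ass(I^k)\subseteq\Ass(I^{k+1})$ holds, e.g.\ when $I$ is polymatroidal (the persistence property recalled in the Preliminaries, and the only setting in which the Corollary is used later). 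Under that additional hypothesis your step~3 becomes the known persistence property and your argument closes; without it, both your proposed propagation step and the statement itself fail.
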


\begin{proof}
By Remark \ref{R1}(e) and by using \cite[Remark 1.1]{HM}, for all $t\geq 1$, we have $\Ass(I^t)=\cup_{i=1}^3\Ass(I[i]^t)=\cup_{i=1}^3\Ass(I[i])=\Ass(I)$.
Thus $\astab(I)=1$ and so by \cite[Theorem 1.2]{HM} the result follows.
\end{proof}
\begin{Proposition}\label{P1}
Let $I\subset R$ be a polymatroidal ideal. Then $\astab(I[i])\leq\astab(I)$ for $i=1,...,n$.
\end{Proposition}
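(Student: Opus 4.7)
The plan is to reduce the claim to a direct manipulation of associated primes under the operation $J \mapsto J[i]$, which by Remark~\ref{R1}(a)--(b) is nothing but passing to $JR_{x_i} \cap R = (J : x_i^\infty)$. The single non-trivial ingredient I would use is the well-known identity
\[
\Ass\bigl(R/(J : x_i^\infty)\bigr) \;=\; \{\,P \in \Ass(R/J) : x_i \notin P\,\},
\]
valid for every ideal $J$ of $R$. This is either read off an irredundant primary decomposition of $J$, or deduced from Remark~\ref{R1}(a) together with the standard bijection between primes of $R_{x_i}$ and primes of $R$ not containing $x_i$ (and the fact that localization preserves associated primes).

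Next, I would set $k_0 = \astab(I)$ and combine the identity above with Remark~\ref{R1}(d), which gives $(I[i])^k = I^k[i]$ for all $k \geq 1$. Applying the identity to $J = I^k$ yields
\[
\Ass\bigl((I[i])^k\bigr) \;=\; \{\,P \in \Ass(I^k) : x_i \notin P\,\}.
\]
For $k \geq k_0$ the right-hand side depends only on $\Ass(I^k) = \Ass(I^{k_0})$, so it is constant and equal to $\Ass\bigl((I[i])^{k_0}\bigr)$.

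Thus $\Ass\bigl((I[i])^k\bigr) = \Ass\bigl((I[i])^{k_0}\bigr)$ for every $k \geq k_0$, which by the definition of the index of Ass-stability forces $\astab(I[i]) \leq k_0 = \astab(I)$. I do not expect any serious obstacle; the polymatroidal hypothesis is not genuinely needed for this inequality (it only guarantees that $I[i]$ is again a polymatroidal, in particular an ordinary, ideal so that $\astab(I[i])$ is well-defined), and the entire argument rests on the saturation formula for associated primes together with Remark~\ref{R1}.
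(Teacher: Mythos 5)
Your proof is correct, but it follows a genuinely different route from the paper's. The paper fixes $t=\astab(I)$, invokes the persistence property of the polymatroidal ideal $I[i]$ to obtain the ascending chain $\Ass(I[i]^t)\subseteq\Ass(I[i]^{t+1})\subseteq\cdots$, and then proves the reverse inclusion by first embedding $\Ass(I[i]^{t+1})$ into $\Ass(I^{t+1})=\Ass(I^t)$ via the exact sequence
$0\to R/(I^{t+1}:x_i^{\infty})\to R/I^{t+1}\to R/(I^{t+1},x_i^{\infty})\to 0$, writing a witness $\pp=(I^t:\alpha)$, and pushing it through the operation $[i]$ using Remark~\ref{R1}(c). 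You instead establish the exact description $\Ass\bigl((I[i])^k\bigr)=\{P\in\Ass(I^k):x_i\notin P\}$ from the saturation formula together with Remark~\ref{R1}(b),(d); that identity is indeed standard (localization preserves associated primes, and $x_i$ is a nonzerodivisor modulo $(J:x_i^\infty)$, so no associated prime of the saturation contains $x_i$), and it immediately forces $\Ass\bigl((I[i])^k\bigr)$ to be constant for $k\geq\astab(I)$. What your approach buys is that the polymatroidal hypothesis, and in particular the persistence property on which the paper's proof genuinely depends, is not needed at all: the inequality holds for any monomial ideal (indeed for any ideal, reading $I[i]$ as $(I:x_i^{\infty})$), and your identity is a sharper pointwise refinement of Remark~\ref{R1}(e). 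What the paper's approach buys is that it stays entirely within the toolkit it has already assembled (persistence, the colon characterization of associated primes, and Remark~\ref{R1}(c)) without appealing to the localization formula for $\Ass$. The only thing I would ask of you is to include a one-line justification or reference for the saturation identity, since it carries the entire weight of your argument; either of the two justifications you sketch is adequate.
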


\begin{proof}
Set $\astab(I)=t$. It follows $\Ass(I^t)=\Ass(I^{t+1})=\cdots$. Since $I[i]$ is polymatroidal, we have $\Ass(I[i]^t)\subseteq\Ass(I[i]^{t+1})\subseteq \cdots$. Let $\frak{p}\in\Ass(I[i]^{t+1})$. From the exact sequence $$0\longrightarrow R/(I^{t+1}:x_i^{\infty})\overset{x_i^{\infty}}\longrightarrow R/I^{t+1}\longrightarrow R/(I^{t+1},x_i^{\infty})\longrightarrow 0,$$ we have $\Ass(I[i]^{t+1})\subseteq\Ass(I^{t+1})$. Thus $\frak{p}\in\Ass(I^{t+1})=\Ass(I^t)$ and so there exists a monomial $\alpha$ of $R$ such that $\frak{p}=(I^t:\alpha)$.
Since $x_i\notin\frak{p}$, we have $\frak{p}=\frak{p}[i]=(I^t:\alpha)[i]=(I^t[i]:\alpha[i])$. Therefore $\frak{p}\in\Ass(I[i]^t)$ and so $\Ass(I[i]^{t})=\Ass(I[i]^{t+1})$. Thus $\astab(I[i])\leq t$, as required.
\end{proof}

\begin{Proposition}\label{P2}
Let $I\subset R$ be a polymatroidal ideal. If ${\mm}\notin\Ass^{\infty}(I)$ or if ${\mm}\in\Ass(I)$, then $\astab(I)=\max\{\astab(I[i]): i=1,...,n\}$.
\end{Proposition}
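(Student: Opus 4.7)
The inequality $\max\{\astab(I[i]):i=1,\ldots,n\}\leq\astab(I)$ is already given by Proposition \ref{P1}. The remaining task is to prove the reverse inequality $\astab(I)\leq t$, where $t:=\max\{\astab(I[i]):i=1,\ldots,n\}$. Set
\[
S_k:=\bigcup_{i=1}^{n}\Ass(I[i]^k).
\]
By Remark \ref{R1}(e), $\Ass(I^k)\setminus\{\mm\}=S_k$. By the definition of $t$, for every $k\geq t$ and every $i$, $\Ass(I[i]^k)=\Ass(I[i]^t)$, so $S_k=S_t$ for all $k\geq t$. Thus the only possible variation of $\Ass(I^k)$ for $k\geq t$ is the presence or absence of $\mm$, and it suffices to show that this membership is constant (in $k$) in each of the two hypothetical cases.

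In the case $\mm\in\Ass(I)$, I will invoke the persistence property for polymatroidal ideals recalled in the preliminaries: $\Ass(I^k)\subseteq\Ass(I^{k+1})$ for all $k$. Hence $\mm\in\Ass(I^k)$ for every $k\geq 1$, and so $\Ass(I^k)=\{\mm\}\cup S_t$ for all $k\geq t$. Therefore $\Ass(I^k)$ is independent of $k$ for $k\geq t$, yielding $\astab(I)\leq t$.

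In the case $\mm\notin\Ass^{\infty}(I)$, persistence implies the contrapositive: if $\mm$ were in $\Ass(I^k)$ for some $k$, it would belong to $\Ass(I^m)$ for every $m\geq k$, contradicting $\mm\notin\Ass^{\infty}(I)$. Hence $\mm\notin\Ass(I^k)$ for every $k\geq 1$, and thus $\Ass(I^k)=S_k=S_t$ for all $k\geq t$, again giving $\astab(I)\leq t$.

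The argument is short and essentially a bookkeeping exercise; the only conceptual point is the use of persistence to make the indicator ``$\mm\in\Ass(I^k)$'' constant in $k$ under either hypothesis. The mildly delicate step to double-check is the application of Remark \ref{R1}(e) in tandem with Proposition \ref{P1}, ensuring that the stabilization index of each $\Ass(I[i]^k)$ really controls the union $S_k$ uniformly in $i$ (which is automatic because the maximum is taken over finitely many $i$).
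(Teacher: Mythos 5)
Your proposal is correct and follows essentially the same route as the paper: the lower bound comes from Proposition \ref{P1}, and the upper bound comes from Remark \ref{R1}(e) together with the persistence property to keep the membership of $\mm$ in $\Ass(I^k)$ constant under either hypothesis. You merely spell out the $\mm\in\Ass(I)$ case, which the paper dispatches with ``the same proof as above.''
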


\begin{proof}
 Set $t=\max\{\astab(I[i]): i=1,...,n\}$ and let ${\mm}\notin\Ass^{\infty}(I)$. Thus $\Ass(I[i]^t)=\Ass(I[i]^{t+1})$ for $i=1,...,n$. By Remark \ref{R1}(e), $\Ass(I^t)=\cup_{i=1}^n\Ass(I[i]^t)=\cup_{i=1}^n\Ass(I[i]^{t+1})$ and so $\Ass(I^t)=\Ass(I^{t+1})=\cdots.$ Therefore $\astab(I)\leq t$ and so by Proposition \ref{P1} the result follows.\\
If ${\mm}\in\Ass(I)$, then by using the same proof as above we have the result.
\end{proof}

The following example says that the condition ${\mm}\notin\Ass^{\infty}(I)$ or ${\mm}\in\Ass(I)$ in Proposition \ref{P2} is essential.
\begin{Example}
Let $n=3$ and $I=(x_1x_2,x_1x_3,x_2x_3)\subset R$. Then $\astab(I)=2$ and $\astab(I[i])=1$ for $i=1,2,3$.
\end{Example}

Let $I\subseteq R$ be a monomial ideal. In the case that $G(I)\subseteq S=K[x_{i_1},...,x_{i_k}]$ we denote by an abuse of notation the ideal $G(I)S$ again by $I$. Observe that by using this notion it follows that $\Ass_R(I)=\Ass_S(I)$ and so $\astab_R(I)=\astab_S(I)$. Also, it is well known that $\dstab_R(I)=\dstab_S(I)$. Thus, for computing $\astab(I)$ and $\dstab(I)$ we can always assume that $I$ is full-supported.

\begin{Lemma}\label{L3}
Let $I, J$ be polymatroidal ideals such that $\gcd (J)=1$. If $\alpha$ is a monomial element of $R$ such that $I={\alpha}J$, then $\astab(I)=\astab(J)$ and $\dstab(I)=\dstab(J)$.
\end{Lemma}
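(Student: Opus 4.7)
The plan is to extract a short exact sequence that relates $R/I^k$ to $R/J^k$, and then exploit it first for depth and then for the associated primes.

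Since $\alpha^k$ is a nonzerodivisor and $I^k = \alpha^k J^k$, the kernel of the composition $R \xrightarrow{\cdot\,\alpha^k} R \twoheadrightarrow R/I^k$ equals $(I^k : \alpha^k) = J^k$, and its image is $(\alpha^k)/I^k$, giving the short exact sequence
\[
0 \longrightarrow R/J^k \xrightarrow{\cdot\,\alpha^k} R/I^k \longrightarrow R/(\alpha^k) \longrightarrow 0.
\]
Because $(\alpha^k)$ is principal with a nonzerodivisor generator, $\depth R/(\alpha^k) = n-1$, while $\depth R/J^k \leq n-1$ since $J^k$ is a proper ideal. A routine application of the depth lemma in both the generic case $\depth R/J^k < n-1$ and the boundary case $\depth R/J^k = n-1$ forces $\depth R/I^k = \depth R/J^k$ for every $k$, so the depth functions of $I$ and $J$ coincide and $\dstab(I) = \dstab(J)$.

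For the associated primes, I would establish the identity
\[
\Ass(R/I^k) = \Ass(R/J^k) \cup S, \qquad S := \{\,(x_i) : x_i \mid \alpha\,\},
\]
with $S$ independent of $k$. The containment $\Ass(R/J^k)\subseteq \Ass(R/I^k)$ follows from $(I^k : \alpha^k v) = (J^k : v)$. For $S\subseteq \Ass(R/I^k)$ with $x_i\mid \alpha$, I would pick a generator $w\in G(J^k)$ not divisible by $x_i$ (which exists because $\gcd(J)=1$ forces $\gcd(J^k)=1$) and check by divisibility that $u = (\alpha^k/x_i)w$ satisfies $(I^k : u) = (x_i)$. Conversely, given $\fp = (I^k : u)\in \Ass(R/I^k)$: if $\alpha^k\mid u$, then $u = \alpha^k v$ with $v\notin J^k$ and $\fp = (J^k : v) \in \Ass(R/J^k)$; otherwise $\fp\subseteq ((\alpha^k):u) = (\beta)$ for the nontrivial monomial $\beta = \alpha^k/\gcd(\alpha^k, u)$, and a monomial prime contained in a proper principal monomial ideal must equal $(x_j)$ for some variable $x_j$ with $\beta = x_j$, which forces $x_j \mid \alpha$.

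The main obstacle is this last case: one must recognize that the constraint $\fp\subseteq (\beta)$ pins a monomial prime down to a height-one prime generated by a single variable dividing $\alpha$. Granted the identity, the proof concludes quickly: $\gcd(J)=1$ implies $J\not\subseteq (x_j)$ for every variable $x_j$, so $JR_{(x_j)} = R_{(x_j)}$ and therefore $(x_j)\notin \Ass(R/J^k)$ for any $k$, making $S\cap \Ass^\infty(J) = \emptyset$. The decomposition $\Ass(R/I^k) = \Ass(R/J^k)\sqcup S$ is thus disjoint with $S$ fixed, so the two ascending chains stabilize at the same index, giving $\astab(I) = \astab(J)$.
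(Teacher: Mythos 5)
Your proof is correct, and it reaches the same two-part decomposition as the paper (equal depth functions for all powers, plus $\Ass(I^k)=\Ass(J^k)\sqcup\{(x_i):x_i\mid\alpha\}$ with the second set fixed and disjoint from $\Ass(J^k)$ because $\gcd(J)=1$), but it gets there by more elementary means. For the depth part the paper simply observes that multiplication by $\alpha^k$ gives an isomorphism $J^k\cong I^k$ of $R$-modules, so $\pd(I^k)=\pd(J^k)$ and Auslander--Buchsbaum finishes; your short exact sequence $0\to R/J^k\to R/I^k\to R/(\alpha^k)\to 0$ with the depth lemma is an equivalent amount of work (and in fact the two inequalities $\depth R/I^k\geq\min\{\depth R/J^k,\,n-1\}$ and $\depth R/J^k\geq\min\{\depth R/I^k,\,n\}$ already force equality uniformly, without splitting into cases). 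For the associated primes the paper invokes the Katz--Ratliff theorem \cite[Theorem 1.3]{KR} (which needs normality of polymatroidal ideals) to get $\Ass(\alpha)\cup\Ass(J^t)\subseteq\Ass(I^t)$ and declares the reverse inclusion clear, whereas you verify both inclusions by direct colon-ideal computations with monomials; your argument also makes explicit the two points the paper leaves implicit, namely why a witness for $(x_i)\in\Ass(I^k)$ exists (a generator of $J^k$ not divisible by $x_i$, using $\gcd(J^k)=1$) and why $\gcd(J)=1$ makes the union disjoint so that the stabilization indices coincide. The trade-off is self-containedness versus brevity: your route needs no integral-closure input, at the cost of the case analysis on whether $\alpha^k$ divides the witness monomial.
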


\begin{proof}
Since $I^t\cong J^t$ for all $t\geq 1$, it follows that $\pd(I^t)=\pd(J^t)$ and so by the Auslander-Buchsbaum formula $\depth(R/I^t)=\depth(R/J^t)$ for all $t\geq 1$. Thus $\dstab(I)=\dstab(J)$. By using \cite[Theorem 1.3]{KR}, for all $t\geq 1$, $\Ass({\alpha})\cup\Ass(J^t)\subseteq\Ass(I^t)$ and it is clear $\Ass(I^t)\subseteq\Ass({\alpha})\cup\Ass(J^t)$ for all $t\geq 1$. Therefore, for all $t\geq 1$,  $\Ass(I^t)=\Ass({\alpha})\cup\Ass(J^t)$ and so $\astab(I)=\astab(J)$, as required.
\end{proof}

\begin{Theorem}\label{T1}
Let $I\subset R$ be a polymatroidal ideal in degree $2$. Then $\astab(I)=\dstab(I)$. In particular, if $I$ is a matroidal ideal in degree $2$, then $\astab(I)=\dstab(I)$.
\end{Theorem}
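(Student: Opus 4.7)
The plan is to reduce to the case where $I$ is full-supported with $\gcd(I) = 1$ via Lemma \ref{L3}, then to analyze the colon ideals $I[i]$ explicitly and feed the result into Remark \ref{R1}(e) and Lemma \ref{L2}. The reduction is immediate: write $I = \alpha J$ with $\gcd(J) = 1$ and view $J$ in the polynomial ring $K[\supp(J)]$, where it is full-supported; Lemma \ref{L3} preserves both $\astab$ and $\dstab$.

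The heart of the argument is the following structural claim: if $I$ is a full-supported polymatroidal ideal of degree $2$, then each $I[i]$ is either $R$ or a monomial prime ideal. If $x_i^2 \in G(I)$ then $1 \in I[i]$ and $I[i] = R$. Otherwise every generator $u$ of $I$ has $\deg_{x_i}(u) \leq 1$; the generators $u = x_a x_i$ contribute linear forms $x_a$ to $I[i]$, while the remaining degree-$2$ generators $u = x_a x_b$ with $x_i \notin \supp(u)$ are redundant. This redundancy is exactly where Lemma \ref{L1} enters: applied to $u$ with $y = x_i$, it produces either $x_a x_i$ or $x_b x_i \in G(I)$, so one of $x_a, x_b$ is already a linear generator of $I[i]$ dividing $u$. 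Thus $I[i] = R$, or else $I[i] = \frak{p}_i := (x_a : x_a x_i \in G(I))$, a monomial prime ideal.

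With $I[i]$ prime (or the unit ideal), $\Ass(I[i]^t)$ is independent of $t$, so Remark \ref{R1}(e) gives
\[
\Ass(I^t) \setminus \{\mm\} = \bigcup_{i=1}^{n} \Ass(I[i]^t) = \{\frak{p}_i : I[i] \neq R\} =: S,
\]
a set constant in $t$. On the depth side, Lemma \ref{L2} yields $\depth R/I \leq 1$, and together with the non-increasing-depth property of polymatroidal ideals this forces $\depth R/I^t \in \{0,1\}$ for every $t$, with the value being $0$ precisely when $\mm \in \Ass(I^t)$. The persistence property then says $\{t : \mm \in \Ass(I^t)\}$ is either empty (both sequences are constant, giving $\astab(I) = \dstab(I) = 1$) or a half-line $[k_0,\infty)$ (both sequences jump at $t = k_0$, giving $\astab(I) = \dstab(I) = k_0$). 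The matroidal case is the specialization to square-free generators. The only delicate point is the redundancy argument for $I[i]$, which is essentially a one-line application of Lemma \ref{L1}; everything after that is bookkeeping using results established in the preliminary section.
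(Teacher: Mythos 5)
Your proof is correct and follows essentially the same route as the paper: reduce to the full-supported, $\gcd(I)=1$ case, control $\Ass(I^t)\setminus\{\mm\}$ via Remark \ref{R1}(e) together with the fact that each $I[i]$ is a monomial prime or the unit ideal (the paper merely asserts $\Ass(I[i]^t)=\Ass(I[i])$; your Lemma \ref{L1} argument supplies the missing justification), and use Lemma \ref{L2} plus persistence and non-increasing depth to identify $\dstab(I)$ with the first power at which $\mm$ becomes associated. The only difference is presentational: the paper splits into the cases $\mm\notin\Ass^{\infty}(I)$ and $\mm\in\Ass^{\infty}(I)$, which you merge into a single bookkeeping step at the end.
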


\begin{proof}
We can assume that $I$ is a full-supported polymatroidal ideal in degree $2$ and $\gcd(I)=1$. Now,
we consider two cases.\\
{\it Case 1}: Let ${\mm}\notin\Ass^{\infty}(I)$.\\
By Remark \ref{R1}(e), for all $t\geq 1$, $\Ass(I^t)=\cup_{i=1}^n\Ass(I[i]^t)$. Since $\Ass(I[i]^t)=\Ass(I[i])$ for $i=1,...,n$, it therefore follows $\astab(I)=1$. By using Lemma \ref{L2} we have $\depth R/I=1$. Since ${\mm}\notin\Ass^{\infty}(I)$ it follows $\depth R/I^t>0$ for all $t\geq 1$. Since $\depth R/I^t\leq\depth R/I=1$ for all $t\geq 1$, we have $\depth R/I^t=1$ for all $t\geq 1$. Therefore $\dstab(I)=1$ and so the result follows in this case.\\
{\it Case 2}: Let ${\mm}\in\Ass^{\infty}(I)$.\\
Let $t$ be the largest integer number such that ${\mm}\notin\Ass(I^t)$. By Remark \ref{R1}(e), $\Ass(I^{s+1})\setminus\{\mm\}=\cup_{i=1}^n\Ass(I[i]^{s+1})$ for all $s\geq t$.
 Since $\Ass(I[i]^{s+1})=\Ass(I[i]^s)$ for $i=1,...,n$ and for all $s\geq 1$ , we have $\Ass(I^{s+1})=\Ass(I^t)\cup\{\mm\}$ for all $s\geq t$. Hence $\astab(I)=\dstab(I)=t+1$.
\end{proof}

\begin{Corollary}\label{C1}
Let $I\subset R$ be a matroidal ideal in degree $2$. Then $\astab(I)=\dstab(I)\leq 2$.
\end{Corollary}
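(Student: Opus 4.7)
The plan is to apply Theorem \ref{T1} to conclude $\astab(I)=\dstab(I)$, and then to bound $\dstab(I)\le 2$. By Lemma \ref{L3} I may assume $I$ is full-supported and $\gcd(I)=1$. Because $I$ is matroidal of degree $2$, its generators are the bases of a rank-$2$ loop-free matroid, and by the classical structure theorem for rank-$2$ matroids the ground set decomposes into parallel classes $P_1,\dots,P_s$, with a pair being a basis if and only if its two elements lie in different classes. Consequently $I$ is the edge ideal of the complete multipartite graph $K_{|P_1|,\dots,|P_s|}$, and the hypothesis $\gcd(I)=1$ leaves two possibilities: either $s=2$ with $|P_1|,|P_2|\ge 2$, or $s\ge 3$.

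In the first case $I=\mathfrak{p}_1\mathfrak{p}_2$ is a product of two prime ideals in pairwise disjoint sets of variables, and Theorem 1.1(iii) gives $\dstab(I)=1$ directly.

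For $s\ge 3$ I pick representatives $x\in P_1$, $y\in P_2$, $z\in P_3$ from three different parallel classes and show that $(I^2:xyz)=\mathfrak{m}$. This will force $\mathfrak{m}\in\Ass(I^2)$ and hence $\depth R/I^2=0$; combined with the non-increasing depth function of powers of polymatroidal ideals this yields $\depth R/I^t=0$ for every $t\ge 2$, so $\dstab(I)\le 2$. The inclusion $(I^2:xyz)\subseteq\mathfrak{m}$ is immediate since $xyz$ has degree $3$ while $I^2$ is generated in degree $4$. For the reverse inclusion I verify $w\cdot xyz\in I^2$ for each variable $w$ by exhibiting a factorization into two degree-$2$ generators: if $w\in\{x,y,z\}$, say $w=x$, then $wxyz=(xy)(xz)$; if $w\in P_k$ for some $k\ge 4$, then $wxyz=(wx)(yz)$; and if $w\in P_i\setminus\{x,y,z\}$ for some $i\in\{1,2,3\}$, say $w\in P_1\setminus\{x\}$, then $wxyz=(wy)(xz)$.

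The routine part of the argument is the case analysis above. The only nontrivial ingredient is the identification of $I$ with the edge ideal of a complete multipartite graph; this is the step I would expect to be the main obstacle if one insists on a self-contained proof, but it follows from the symmetric exchange property applied to two squarefree degree-$2$ generators, which forces non-adjacency among the variables to be an equivalence relation — the classical fact that parallelism in a matroid is an equivalence relation.
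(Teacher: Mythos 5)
Your proof is correct, but it reaches the bound $\dstab(I)\leq 2$ by a genuinely different route than the paper. Both arguments get the equality $\astab(I)=\dstab(I)$ from Theorem \ref{T1}; the difference lies in how the second power is shown to be the stabilization point. The paper splits on whether $\mm\in\Ass^{\infty}(I)$: if not, it quotes the proof of Theorem \ref{T1} to get $\astab(I)=\dstab(I)=1$; if so, it views $I$ as the edge ideal of a connected non-bipartite graph, cites Chiang-Hsieh's lemma that a matroidal edge ideal containing an odd cycle contains a triangle, and then cites the Herzog--Hibi socle theorem to conclude $\depth R/I^2=0$. You instead classify the underlying rank-$2$ matroid by its parallel classes, so that $I$ is the edge ideal of a complete multipartite graph $K_{|P_1|,\dots,|P_s|}$; your dichotomy $s=2$ versus $s\geq 3$ is exactly the bipartite/non-bipartite dichotomy of the paper (and, for $s=2$, the complete bipartite structure shows directly that $I=\fp_1\fp_2$ is transversal, so Theorem 1.1(iii) gives $\dstab(I)=1$, where the paper instead routes through $\mm\notin\Ass^{\infty}(I)$). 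For $s\geq 3$ your triangle $x,y,z$ and the explicit verification that $(I^2:xyz)=\mm$ replace both external citations with a short colon computation; the factorizations $x^2yz=(xy)(xz)$, $wxyz=(wx)(yz)$, $wxyz=(wy)(xz)$ are all legitimate products of generators since the relevant pairs lie in distinct parallel classes. The trade-off is clear: your argument is self-contained and makes the socle element visible, at the cost of first establishing (via Lemma \ref{L1}, as you note) that non-adjacency is transitive; the paper's is shorter but leans on two outside results and is slightly terse about why $\dstab(I)\neq 1$ in the non-bipartite case.
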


\begin{proof}
If ${\mm}\notin\Ass^{\infty}(I)$, then by the proof of Theorem \ref{T1} $\astab(I)=\dstab(I)=1$. \\
Let ${\mm}\in\Ass^{\infty}(I)$. Since $I$ is a edge ideal of a connected graph $G$, it follows that $G$ has a cycle of length odd. Therefore by \cite[Lemma 2.3]{C}
$G$ has a cycle of length 3 and so by \cite[Theorem 3.1]{HH3} $\depth R/I^2=0$. Thus by Theorem \ref{T1} we have $\astab(I)=\dstab(I)=2$.
\end{proof}

We recall the following definition from \cite{BH} (or see \cite{HH4}).
\begin{Definition}
Let $I$ be a monomial ideal. We say that $I$ satisfies the strong exchange property if $I$ is generated in a single degree, and for all $u,v\in G(I)$ and for all $i,j$ with $\deg_{x_i}(u)>\deg_{x_i}(v)$ and $\deg_{x_j}(u)<\deg_{x_j}(v)$, one has $x_j(u/x_{i})\in G(I)$.
\end{Definition}

Note that if $n=3$, then every polymatroidal ideal $I$ with $\gcd(I)=1$, satisfying the strong exchange property (see \cite[Propsition 2.7]{BH}).

One of the most distinguished polymatroidal ideals is the ideal of Veronese type. Consider the fixed positive integers $d$ and $1\leq a_1\leq ...\leq a_n\leq d$. The ideal of Veronese type of $R$ indexed by $d$ and $(a_1,...,a_n)$ is the ideal $I_{(d;a_1,...,a_n)}$ which is generated by those monomials $u=x_1^{i_1}...x_n^{i_n}$ of $R$ of degree $d$ with $i_j\leq a_j$ for each $1\leq j\leq n$.

\begin{Proposition}\label{P4}
Let $I\subset R$ be a polymatroidal ideal satisfying the strong exchange property. Then $\astab(I)=\dstab(I)$.
\end{Proposition}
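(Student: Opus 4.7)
The strategy is to reduce the statement to the already‑known case of ideals of Veronese type by means of the structure theorem for polymatroidal ideals with the strong exchange property. Recall that by \cite[Proposition 2.7]{BH} (and the discussion in \cite{HH4}), a polymatroidal ideal $I\subset R$ satisfies the strong exchange property if and only if, after restriction to $\supp(I)$, one can write
\[
I \;=\; \alpha\cdot I_{(d;\,a_1,\ldots,a_m)},
\]
where $\alpha$ is a monomial in $R$ (carrying the common factor, so that $\gcd(I_{(d;a_1,\ldots,a_m)})=1$), and $I_{(d;a_1,\ldots,a_m)}$ is a Veronese type ideal in the variables of $\supp(I)\setminus\supp(\alpha)$. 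So the first step of the plan is simply to quote this structural result and reduce to the case that $I$ is, up to a monomial factor, an ideal of Veronese type.

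Next, I would apply Lemma~\ref{L3} to the decomposition $I=\alpha J$ with $J=I_{(d;a_1,\ldots,a_m)}$. Since $\gcd(J)=1$ by construction, Lemma~\ref{L3} gives at once
\[
\astab(I)=\astab(J) \quad\text{and}\quad \dstab(I)=\dstab(J).
\]
Thus it suffices to verify the equality $\astab(J)=\dstab(J)$ for $J$ a Veronese type ideal. But this is precisely the content of the theorem of Herzog, Rauf and Vladoiu \cite{HRV}, quoted already in the introduction: for every polymatroidal ideal of Veronese type, $\astab=\dstab$. Putting the two reductions together yields $\astab(I)=\dstab(I)$.

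The only non‑routine ingredient is the structural theorem identifying strong‑exchange polymatroidal ideals with (shifts of) Veronese type ideals; everything else is a direct concatenation of Lemma~\ref{L3} and the Herzog--Rauf--Vladoiu result. Since that structural theorem is the content of \cite[Proposition 2.7]{BH}, the argument is short, and the main obstacle is simply ensuring that the structural classification is invoked in the correct generality (in particular, after passing to $\supp(I)$, where the passage to a smaller polynomial ring does not affect either $\astab$ or $\dstab$, as remarked before Lemma~\ref{L3}).
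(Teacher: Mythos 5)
Your proposal is correct and follows essentially the same route as the paper: reduce via Lemma~\ref{L3} to the case $\gcd(I)=1$ and $I$ full-supported, invoke the classification of strong-exchange polymatroidal ideals as ideals of Veronese type, and conclude by the Herzog--Rauf--Vladoiu result \cite[Corollary 5.7]{HRV}. The only (cosmetic) difference is the reference for the structural step: the paper cites \cite[Theorem 1.1]{HHV} rather than \cite[Proposition 2.7]{BH}, the latter being used in the paper only for the $n=3$ observation.
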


\begin{proof}
By Lemma \ref{L3}, we can assume that $\gcd(I)=1$ and $I$ is full-supported. Therefore by using \cite[Theorem 1.1]{HHV} $I$ is a polymatroidal ideal of Veronese type.
Now by using \cite[Corollary 5.7]{HRV} we have $\astab(I)=\dstab(I)$, as required.
\end{proof}

\begin{Lemma}\label{L4}
Let $I\subset R$ be a full-supported matroidal ideal of degree $n-1$, where $n\geq 2$ and $\gcd(I)=1$. Then $I=I_{n-1;1,1,...,1}$ as a Veronese type ideal.
\end{Lemma}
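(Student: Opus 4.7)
The plan is to classify the possible minimal generators and then use the $\gcd$ hypothesis to conclude that every candidate generator actually appears. Since $I$ is matroidal of degree $n-1$, every element of $G(I)$ is a squarefree monomial of degree $n-1$ in the $n$ variables $x_1,\ldots,x_n$. There are exactly $n$ such monomials, namely
\[
u_i \;=\; \prod_{j\neq i} x_j, \qquad i=1,\ldots,n,
\]
each of which is the product of all variables except one. Hence automatically $G(I)\subseteq\{u_1,\ldots,u_n\}$, and these generators are exactly the ones defining the Veronese type ideal $I_{n-1;1,1,\ldots,1}$.

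Next I would invoke the hypothesis $\gcd(I)=1$. The exponent of $x_k$ in $\gcd(I)$ equals the minimum of $\deg_{x_k}(u)$ over $u\in G(I)$. Among the candidates $u_1,\ldots,u_n$, the only one not divisible by $x_k$ is $u_k$ itself, while all other $u_j$ with $j\neq k$ have $\deg_{x_k}=1$. Therefore the $x_k$-exponent of $\gcd(I)$ is $0$ precisely when $u_k\in G(I)$, and is $1$ otherwise. The equality $\gcd(I)=1$ then forces $u_k\in G(I)$ for every $k=1,\ldots,n$, whence $G(I)=\{u_1,\ldots,u_n\}$ and $I=I_{n-1;1,1,\ldots,1}$, as claimed.

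There is essentially no obstacle here: the matroidal exchange property is automatically satisfied for this generating set (the uniform matroid of rank $n-1$), and the hypothesis that $I$ be full-supported is in fact redundant since the presence of any two distinct $u_i$'s already covers all variables. The content of the lemma is purely the combinatorial observation that squarefree monomials of degree $n-1$ in $n$ variables are parametrized by their unique missing variable, together with the translation of $\gcd(I)=1$ into the statement that every missing variable is realized.
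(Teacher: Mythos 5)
Your proof is correct, and it is genuinely different from (and considerably more elementary than) the one in the paper. You observe that a matroidal ideal of degree $n-1$ can only have generators among the $n$ squarefree monomials $u_i=\prod_{j\neq i}x_j$, and that $\gcd(I)=1$ forces every $u_k$ to occur, since $u_k$ is the unique candidate generator not divisible by $x_k$; this never uses the exchange property, the full-support hypothesis, or any homological input. The paper instead argues homologically: for $n=2,3$ it invokes the strong exchange property to reduce to the Veronese case, and for $n\geq 4$ it uses Chiang-Hsieh's formula $\depth R/I=d-1=n-2$ for full-supported matroidal ideals, deduces $\pd(R/I)=2=\height(I)$ (the height being $2$ because $\gcd(I)=1$ rules out height one and every degree-$(n-1)$ squarefree generator lies in each $(x_i,x_j)$), concludes that $I$ is Cohen--Macaulay, and then applies Herzog--Hibi's classification of Cohen--Macaulay polymatroidal ideals to identify $I$ with $I_{n-1;1,\ldots,1}$. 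Your counting argument buys a self-contained proof with no case split on $n$ and no external theorems, and it even shows the hypotheses ``matroidal'' and ``full-supported'' are stronger than needed (squarefree, generated in degree $n-1$, and $\gcd(I)=1$ suffice); the paper's route, while heavier, fits the lemma into the depth/Cohen--Macaulay machinery that the rest of the paper is built on.
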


\begin{proof}
 For $n=2,3$, a matroidal ideal $I$ satisfies in the strong exchange property and so $I$ is a square-free Veronese ideal and the result follows in this case.  Now, let $n\geq 4$. Then $\depth R/I=n-2$ and so $\pd(R/I)=2$. Therefore $\height(I)=\pd(R/I)$ and so $I$ is a Cohen-Macaulay ideal. By using \cite[Theorem 4.2]{HH2} $I=I_{n-1;1,...,1}$, as desired.
\end{proof}

\begin{Theorem}\label{T2}
Let $n=4$ and $I\subset R$ be a matroidal ideal of degree $d$. Then $\astab(I)=\dstab(I)$.
\end{Theorem}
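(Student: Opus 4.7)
The plan is to reduce to a full-supported matroidal ideal with $\gcd(I)=1$, after which either the number of variables drops below $4$ (and the Herzog--Mafi theorem for $n\leq 3$ quoted in the introduction settles things), or the degree $d$ is constrained to the values $1,2,3$, each of which is covered by an earlier result in this paper.

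First, using the comment preceding Lemma \ref{L3}, I replace $R$ by the subring generated by $\supp(I)$, making $I$ full-supported without affecting $\astab$ or $\dstab$. Then Lemma \ref{L3} lets me divide out $\gcd(I)$, again preserving both invariants. What remains is a matroidal ideal $I$ of the same degree $d$, full-supported in some $K[x_{i_1},\dots,x_{i_{n'}}]$ with $n'\leq 4$ and $\gcd(I)=1$; the residual ideal is still matroidal because factoring out a common square-free monomial preserves square-freeness and the polymatroidal property.

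If $n'\leq 3$, the Herzog--Mafi result cited in the introduction gives $\astab(I)=\dstab(I)$ at once. So I may assume $n'=4$. Since the generators of $I$ are square-free monomials of degree $d$ in four variables, $d\in\{1,2,3,4\}$; the value $d=4$ would force $I=(x_1x_2x_3x_4)$, whose gcd is $x_1x_2x_3x_4\neq 1$, so in fact $d\in\{1,2,3\}$. For $d=1$ the ideal is $\fm$ and both invariants equal $1$ trivially. For $d=2$ I invoke Theorem \ref{T1} directly. For $d=3=n-1$, Lemma \ref{L4} identifies $I$ with the Veronese-type ideal $I_{3;1,1,1,1}$, and the Herzog--Rauf--Vladoiu result recorded in the introduction (namely \cite[Corollary 5.7]{HRV}, giving $\astab=\dstab$ for all polymatroidal ideals of Veronese type) finishes the argument.

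I do not expect a genuine obstacle here: the real content lives in the preparatory Lemmas \ref{L3} and \ref{L4}, in Theorem \ref{T1}, and in the two cited prior results, while the theorem itself is essentially a case analysis packaging these ingredients. The only point requiring some care is to verify that the joint reduction to full-support plus $\gcd(I)=1$ really does produce a matroidal ideal in at most $4$ variables to which Lemma \ref{L4} is applicable when $d=3$; this is immediate from the construction since $\gcd(I)$ is a square-free monomial and its removal leaves a matroidal ideal that is automatically full-supported on the complementary set of variables.
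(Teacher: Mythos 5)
Your proposal is correct and follows essentially the same route as the paper's own proof: reduce to $\gcd(I)=1$ (and full support), dispose of $d\neq 2,3$ as trivial, invoke Theorem \ref{T1} for $d=2$, and use Lemma \ref{L4} together with the Veronese-type result of Herzog--Rauf--Vladoiu for $d=3$. You merely spell out the details the paper leaves implicit (why $d=4$ cannot occur with $\gcd(I)=1$, and the drop to fewer variables handled by Herzog--Mafi), which is a faithful elaboration rather than a different argument.
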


\begin{proof}
We can assume that $\gcd(I)=1$. If $d\neq 2,3$, then the result is clear. If $d=2$, then by Theorem \ref{T1} the result follows. For $d=3$, the result holds by Lemma \ref{L4}.
\end{proof}

\begin{Remark}\label{R2}
Let $n=4$ and $I\subset R$ be a matroidal ideal of degree $d$. Then $\astab(I),\dstab(I)\leq d$.
\end{Remark}

\begin{proof}
If $d=1,4$, then the result is clear. If $d=2$, then by Corollary \ref{C1}, $\astab(I)=\dstab(I)\leq 2$, and for $d=3$ the result follows by \cite[Corollary 5.7]{HRV}. This completes the proof.
\end{proof}

In view of the above remark one may ask whether $\astab(I),\dstab(I)\leq d$ for all matriod ideals $I$.

\begin{Theorem}\label{T3}
Let $n=4$ and $I\subset R$ be a polymatroidal ideal  of degree $d$ and let ${\mm}\notin\Ass^{\infty}(I)$. Then $\astab(I)=\dstab(I)$.
\end{Theorem}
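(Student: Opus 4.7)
My strategy is to reduce both $\astab(I)$ and $\dstab(I)$ to invariants of the localized ideals $I[i]$, each of which is polymatroidal and effectively lives in a polynomial ring in three variables, where Herzog--Mafi \cite[Theorem 1.2]{HM} applies.

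Because $\fm\notin\Ass^{\infty}(I)$ and associated primes persist along powers of polymatroidal ideals, $\fm\notin\Ass(R/I^{t})$ for every $t\geq 1$. Proposition \ref{P2} then yields
\[
\astab(I)\;=\;\max_{1\leq i\leq 4}\astab(I[i]).
\]
Each $I[i]=(I:x_i^{N})$ for $N$ sufficiently large is again polymatroidal, and since $x_i$ appears in no generator of $I[i]$, it may be viewed as an ideal in the three-variable polynomial ring $S_i=K[x_1,\ldots,\widehat{x_i},\ldots,x_4]$. As $x_i$ is a non-zero-divisor modulo $I[i]$, both $\astab$ and $\dstab$ of $I[i]$ computed over $R$ coincide with those computed over $S_i$. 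By \cite[Theorem 1.2]{HM} applied in $S_i$, $\astab(I[i])=\dstab(I[i])$, and hence
\[
\astab(I)\;=\;\max_i \astab(I[i])\;=\;\max_i \dstab(I[i])\;=:\;t_0.
\]

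The remaining task is to prove $\dstab(I)=t_0$. Since $\depth R/I^{t}$ is non-increasing in $t$ for polymatroidal $I$, it suffices to show that $\depth R/I^{t}=\depth R/I^{t_0}$ for all $t\geq t_0$. The plan is to exploit the primary decomposition $I^{t}=\bigcap_i I[i]^{t}$ (which holds precisely because $\fm\notin\Ass(R/I^{t})$) to obtain an injection $R/I^{t}\hookrightarrow\bigoplus_i R/I[i]^{t}$, and to couple this with the Cech complex of $R/I^{t}$ on $x_1,\ldots,x_4$: the localized terms $(R/I^{t})_{x_S}$ depend only on the ideals $I[i]^{t}$ for $i\in S$, so once every $\depth R/I[i]^{t}$ has stabilized (which happens by $t_0$), a local-cohomology argument should force $\depth R/I^{t}$ to stabilize as well, giving $\dstab(I)\leq t_0$.

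The main obstacle will be this final depth-stabilization step, since $\depth R/I^{t}$ is not in general the minimum of the $\depth R/I[i]^{t}$, so a naive intersection formula is insufficient for both bounds. I expect one must additionally invoke the Cohen--Macaulayness of $\gr_I(R)$ for polymatroidal $I$, which by the Eisenbud--Huneke equality pins the limit depth down to $n-\ell(I)$, together with a careful comparison between $\ell(I)$ and the $\ell(I[i])$; combined, these should show that $\depth R/I^{t}$ attains its limit value $4-\ell(I)$ precisely at $t=t_0$, forcing $\dstab(I)=t_0=\astab(I)$.
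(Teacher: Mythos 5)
Your first reduction is sound and coincides with the paper's opening moves: Proposition \ref{P2} applies because $\fm\notin\Ass^{\infty}(I)$, giving $\astab(I)=\max_i\astab(I[i])$; each $I[i]$ may be regarded as a polymatroidal ideal in three variables; and \cite[Theorem 1.2]{HM} converts this into $\astab(I)=\max_i\dstab(I[i])=t_0$. The genuine gap is the second half: you never prove $\dstab(I)=t_0$, and the identity $\dstab(I)=\max_i\dstab(I[i])$ is not an available localization formula --- given the first half, it is logically equivalent to the theorem itself, so your plan reduces the statement to an unproven statement of the same strength. The proposed tools do not close it: the inclusion $R/I^{t}\hookrightarrow\bigoplus_i R/I[i]^{t}$ coming from $I^{t}=\bigcap_i I[i]^{t}$ has an uncontrolled cokernel, so $\depth R/I^{t}$ is not determined by the $\depth R/I[i]^{t}$; the \v{C}ech complex of $R/I^{t}$ on $x_1,\dots,x_4$ involves localizations at products of several variables, which are governed by iterated localizations $I[i][j]$ rather than by the $I[i]$ alone, so stabilization of each $\depth R/I[i]^{t}$ at $t_0$ does not force $H^{1}_{\fm}(R/I^{t})$ to behave uniformly for $t\geq t_0$; and the Eisenbud--Huneke equality only identifies the limit value $4-\ell(I)$, not the index at which it is first attained.

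The paper closes this gap by a dichotomy your plan does not use. After reducing to $\gcd(I)=1$, one has $2\leq\height(I)\leq\pd(R/I)\leq 3$, the upper bound because $\fm\notin\Ass^{\infty}(I)$ forces $\depth R/I^{t}\geq 1$ for all $t$. If $\height(I)=\pd(R/I)$, then $I$ is Cohen--Macaulay, hence of Veronese type by \cite[Theorem 4.2]{HH2}, and \cite[Corollary 5.7]{HRV} gives $\astab(I)=\dstab(I)$ at once. Otherwise $\depth R/I=1$, and the non-increasing depth function is squeezed between $1$ and $1$, so $\dstab(I)=1$ with no further work; the real effort then goes into showing $\max_i\dstab(I[i])=1$, which the paper does by a combinatorial case analysis of the localizations (using $I=\bigcap_i I[i]$, the compatibilities $I[i][j]=I[j][i]$, and the classification of the Cohen--Macaulay case), reducing either to the matroidal case handled by Theorem \ref{T2} or to a contradiction. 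You would need to supply an argument of this kind, or an actual proof that $\dstab$ commutes with monomial localization in this setting, before your outline becomes a proof.
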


\begin{proof}
We can assume that $\gcd(I)=1$. Therefore $\height(I)\geq 2$. Since ${\mm}\notin\Ass^{\infty}(I)$, we have $\depth R/I^t\geq 1$ for all $t\geq 1$. Thus $2\leq\height(I)\leq\pd(R/I)\leq 3$. If $\height(I)=\pd(R/I)$, then by \cite[Theorem 4.2]{HH2} and \cite[Corollary 5.7]{HRV} we have $\astab(I)=\dstab(I)$. Now, let $\pd(R/I)=3$ and $\height(I)=2$. Hence $\depth R/I^t=\depth R/I=1$ for all $t\geq 1$ and so $\dstab(I)=1$. By Proposition \ref{P2} there exists an integer $i$ such that $\astab(I)=\astab(I[i])$, where  $i=1,..,4$.
By \cite[Theorem 1.2]{HM}, $\astab(I[i])=\dstab(I[i])$ and so $\astab(I)=\astab(I[i])=\dstab(I[i])$. For simplicity let $\astab(I)=\astab(I[1])=\dstab(I[1])$.
Since $\gcd(I)=1$, we easily obtain that $\gcd(I[i])=1$ for all $i=1,...,4$ and so $\height(I[i])=2$; since $\Ass(I[i])\subseteq\Ass(I)$. Therefore  $\depth R/I[1]=1$ or $2$. If $\depth R/I[1]=1$, then $\depth R/I^t[1]=1$ for all $t$. Hence $\dstab(I[1])=1$ and so $\astab(I)=1$. Thus the result follows in this case. Now let $\depth R/I[1]=2$. Then $\pd(R/I[1])=2=\height(I[1])$. Hence $I[1]$ is a Cohen-Macaulay
polymatroidal ideal and by \cite[Theorem 4.2]{HH2} $I[1]$ is a square-free Veronese ideal.
Since ${\mm}\notin\Ass^{\infty}(I)$, it follows $\Ass(\Hom_R(R/(x_1^{t_1},...,x_4^{t_4}), R/I))=\emptyset$ for all non-negative integer $t_1,...,t_4$. Thus
$I=\cap_{i=1}^4I[i]$. If for all $i=1,...,4$, $\depth R/I[i]=2$, then it follows that $I$  is a matriod ideal and so by Theorem \ref{T2} the result follows in this case. If $I[1]$ is a square-free Veronese ideal in the polynomial ring with two variables, then again the result follows.
Let $I[1]$ be a square-free Veronese ideal in the polynomial ring $K[x_2,x_3,x_4]$ and let $\depth R/I[2]=1$. Since for  all $i=1,...,4$, $I[i]$ is polymatrodial ideal in the polynomial ring with at most three variables and $\gcd(I[i])=1$, it follows that $I[i]$ is a Veronese-type ideal. We can assume that $I[2]$ is a Veronese-type ideal in $K[x_1,x_3,x_4]$. Let $a_i=\max\{\deg_{x_i}(u): u\in G(I)\}$ for $i=1,...,4$.
Since $I[1][2]=I[2][1]$, we have $a_3=1=a_4$. If $I[3]$ is a Veronese-type ideal in the polynomial ring $K[x_1,x_2,x_4]$, then from $I[1][3]=I[3][1]$ we have $a_2=1$ and also from $I[2][3]=I[3][2]$ we have $a_1=1$. Therefore $I$ is a matroid ideal and the result follows.
We can assume that $I[3]$ is a square-free Veronese ideal in the polynomial ring $K[x_1,x_2]$ and so $I[3]=(x_1,x_2)$. From $I[2][3]=I[3][2]$, we have
$K[x_1,x_2]=(x_1,x_4)$ and this is a contradiction. Thus from all of the above cases we have $\astab(I)=\dstab(I)$, as required.

\end{proof}

\begin{Lemma}
 $I\subset R$ be a polymatroidal ideal and ${\mm}\in\Ass^{\infty}(I)$. Then $\dstab(I)\leq\astab(I)$. In particular, if $\dstab(I)=n-1$ then $\astab(I)=n-1$ or if $\astab(I)=1$, then $\dstab(I)=1$.
\end{Lemma}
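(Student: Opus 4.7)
The plan is to exploit the non-increasing depth function for polymatroidal ideals together with the definition of $\astab(I)$.

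Set $a=\astab(I)$. Since $\mm\in\Ass^{\infty}(I)$, by definition of $\Ass^{\infty}$ and the persistence property for polymatroidal ideals, $\mm\in\Ass(I^{k})$ for every $k\geq a$. But $\mm\in\Ass(I^{k})$ is equivalent to $\depth R/I^{k}=0$. Now invoke the fact recalled in the preliminaries that polymatroidal ideals have a non-increasing depth function, so $\depth R/I^{k+1}\leq\depth R/I^{k}$ for all $k$. Combined with $\depth R/I^{a}=0$, this forces $\depth R/I^{k}=0$ for every $k\geq a$. Hence the depth function has stabilized by step $a$, which gives $\dstab(I)\leq a=\astab(I)$. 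This is the main inequality, and the only genuine content of the proof is this chain of implications.

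For the two consequences, both follow immediately. If $\astab(I)=1$, then $\dstab(I)\leq 1$; since $\dstab(I)$ is a positive integer, $\dstab(I)=1$. If instead $\dstab(I)=n-1$, then from $\dstab(I)\leq\astab(I)$ we get $n-1\leq\astab(I)$, and from the Herzog--Qureshi bound $\astab(I)<\ell(I)\leq n$ we get $\astab(I)\leq n-1$, so $\astab(I)=n-1$.

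There is no real obstacle here; the argument is essentially a bookkeeping exercise that combines three results already available in the preliminaries: the persistence of associated primes, the non-increasing behaviour of $\depth R/I^{k}$, and the Herzog--Qureshi bound $\astab(I)<\ell(I)$. The only subtle point one must be careful about is verifying that the hypothesis $\mm\in\Ass^{\infty}(I)$ is used precisely to ensure $\depth R/I^{a}=0$ and not merely $\depth R/I^{k}=0$ for sufficiently large $k$; this is where the definition of $\astab$ as the exact index of stabilization is essential.
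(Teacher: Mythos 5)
Your argument is correct, and it reaches the inequality by a slightly more economical route than the paper does. The paper first invokes \cite[Corollary 1.6]{HQ} to deduce $\ell(I)=\dim R$ from $\mm\in\Ass^{\infty}(I)$, combines this with the equality in $(\dag)$ (valid because $gr_I(R)$ is Cohen--Macaulay for polymatroidal ideals) and the non-increasing depth function to identify $\dstab(I)=\min\{k:\depth R/I^k=0\}=\min\{k:\mm\in\Ass(I^k)\}$, and then bounds this minimum by $\astab(I)$. You skip the analytic-spread step entirely: from $\mm\in\Ass^{\infty}(I)=\Ass(I^a)$ with $a=\astab(I)$ you get $\depth R/I^k=0$ for all $k\geq a$ directly (your appeal to the non-increasing depth function is even redundant here, since $\Ass(I^k)=\Ass(I^a)$ for $k\geq a$ by the definition of $\astab$), so the depth function is constant from $a$ on and $\dstab(I)\leq a$. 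The only external inputs you need are the equivalence $\mm\in\Ass(J)\iff\depth R/J=0$ and, for the second "in particular" claim, the Herzog--Qureshi bound $\astab(I)<\ell(I)\leq n$; the paper leaves both consequences implicit, and your derivations of them are correct. What the paper's route buys in exchange is the sharper identification of $\dstab(I)$ as the exact first power at which $\mm$ becomes associated, which is more information than the bare inequality, but it is not needed for the statement as given.
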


\begin{proof}
Since ${\mm}\in\Ass^{\infty}(I)$ by \cite[Corollary 1.6]{HQ} we have $\ell(I)=\dim R$. Therefore $\dstab(I)=\min\{k: \depth R/I^k=0\}=\min\{k: {\mm}\in\Ass(I^k)\}$ and so $\dstab(I)\leq\astab(I)$.
\end{proof}

Herzog and Qureshi \cite{HQ} conjectured that $\astab(I)=\dstab(I)$ for all polymatroidal ideal $I$.
The following examples disprove Herzog and Qureshi's conjecture, in general.
\begin{Example}\label{E}
Let $n=4$ and \[
I=(x_1x_2x_3,x_2^2x_3,x_2x_3^2,x_1x_2x_4,x_2^2x_4,x_2x_4^2,x_1x_3x_4,x_3^2x_4,x_3x_4^2,x_2x_3x_4)
.\] Then $\dstab(I)=1$ and $\astab(I)=2$.
\end{Example}

\begin{proof}
It is clear $I$ is a polymatroidal ideal. By using \cite[Theorem 4.1]{HQ}, we have $\dstab(I)=1$ and $\astab(I)=2$, as desired.
\end{proof}

The following example is a generalization of Example \ref{E}.
\begin{Example} Let $n\geq 4$ and
$I=(x_1x_3...x_{n},x_3^2x_4...x_{n},x_3x_4^2...x_{n},...,x_3x_4...x_{n}^2,\\ x_1x_2x_4...x_n,x_2^2x_4...x_n,...,x_2x_4...x_n^2,...,x_1x_2...x_{n-1},
x_2^2x_3...x_{n-1},...,x_2x_3...x_{n-1}^2,x_2x_3...x_n).$ Then $I$ is a polymatroidal ideal with $\dstab(I)=1$ and $\astab(I)=n-2$.
\end{Example}

\begin{proof}
Since $I=I_{n-1;1,1,..,1}+I_{n-1;0,2,2,...,2}$, we easily conclude that $I$ is a polymatroidal ideal. Since $I:x_3x_4...x_n=\frak{m}$, we have $\dstab(I)=1$. For all $i=1,2,...,n,$ $I[i]$ is a square-free Veronese ideal. Thus by \cite[Corollary 5.7]{HRV} $\astab(I[1])=n-2$ and $\astab(I[i])=n-3$ for all $i=2,3,...,n$. Therefore by using Proposition \ref{P2} we have $\astab(I)=n-2$.
\end{proof}
\begin{Lemma}\label{L5}
Let $n=5$ and $I\subset R$ be a matroidal ideal with $\gcd(I)=1$ and $d=3$. Then $\astab(I)\neq 1$.
\end{Lemma}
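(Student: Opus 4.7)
The plan is to argue by contradiction: suppose $\astab(I)=1$. After the customary reduction to the full-supported case, Chiang-Hsieh's theorem \cite[Theorem 2.5]{C} yields $\depth R/I = d - 1 = 2$, so $\mm\notin\Ass(I)$. Under the assumption $\astab(I)=1$, $\Ass^{\infty}(I)=\Ass(I)$, hence $\mm\notin\Ass^{\infty}(I)$, which by \cite[Corollary 1.6]{HQ} forces $\ell(I)<n=5$. Since $\ell(I)=r-s+1$ with $r=5$ (Theorem 1.1), this gives $s\geq 2$, while Theorem 1.1(i) gives $s\leq d=3$.

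Next I would rule out $s=3$. By Theorem 1.1(iii), $s=3$ forces $I=\fp_1\fp_2\fp_3$ with the three monomial primes supported on pairwise disjoint sets of variables. A single variable $x$ divides every generator of this product exactly when $\fp_i=(x)$ for some $i$, so $\gcd(I)=1$ requires each $\fp_i$ to involve at least two variables, giving $n\geq 6$, a contradiction. Therefore $s=2$.

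For $s=2$ I would use the structural identification of the components of $\Gamma_I$ with the matroid connectivity components of the matroid $M$ underlying $I$: $\{x_i,x_j\}\in E(\Gamma_I)$ if and only if $i$ and $j$ lie in a common circuit of $M$. Consequently $M=M_1\oplus M_2$ on ground sets $V_1,V_2$ partitioning $\{1,\ldots,5\}$ with $\rank(M_1)+\rank(M_2)=3$, and both $M_i$ have no loops or coloops (since $I$ is full-supported with $\gcd(I)=1$). A rank-one matroid without coloops needs $|V_i|\geq 2$, and a rank-two matroid on three elements with no loops and no coloops is necessarily $U_{2,3}$. Up to relabeling, the only possibility is $M_1=U_{1,2}$ on $\{1,2\}$ and $M_2=U_{2,3}$ on $\{3,4,5\}$, so
\[
I=(x_1,x_2)(x_3x_4,x_3x_5,x_4x_5).
\]

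Finally, because the two factors involve disjoint sets of variables, $I^2=(x_1,x_2)^2\cap J^2$ with $J=(x_3x_4,x_3x_5,x_4x_5)$, and a short argument yields $\Ass(R/(A\cap B))=\Ass(R/A)\cup\Ass(R/B)$ for any monomial ideals $A,B$ supported on disjoint sets of variables. Now $J$ is the edge ideal of a triangle, and a direct colon computation gives $(J^2:x_3x_4x_5)=(x_3,x_4,x_5)$, so $(x_3,x_4,x_5)\in\Ass(J^2)\subseteq\Ass(I^2)$; on the other hand $\Ass(I)=\{(x_1,x_2),(x_3,x_4),(x_3,x_5),(x_4,x_5)\}$ does not contain $(x_3,x_4,x_5)$, contradicting $\astab(I)=1$. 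The main obstacle is the structural step identifying $\Gamma_I$-components with matroid connectivity components (so that $M$ decomposes as $M_1\oplus M_2$); once that is available, the classification is a short enumeration and the witness computation for the embedded prime of $I^2$ is immediate.
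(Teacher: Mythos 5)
Your proof is correct, but it takes a genuinely different route from the paper's. The paper works entirely through monomial localization: assuming $\astab(I)=1$, Proposition~\ref{P2} together with Theorem~\ref{T1} forces $\astab(I[i])=\dstab(I[i])=1$ for every $i$, whence (by Theorem~1.1) each $I[i]$ is a product of two monomial primes in disjoint variables in four variables, and the contradiction is extracted from the compatibility relations $I[i][j]=I[j][i]$ among these five transversal ideals. You instead classify $I$ itself: Chiang-Hsieh gives $\depth R/I=2$, so $\mm\notin\Ass(I)=\Ass^{\infty}(I)$ and hence $2\le s\le 3$; the case $s=3$ dies by counting variables (here you really need Theorem~1.1(i) to pass from $s=d$ to $\dstab(I)=1$ before invoking (iii)); and for $s=2$ the matroid decomposes as $M_1\oplus M_2$, which the rank/loop/coloop bookkeeping pins down to $I=(x_1,x_2)(x_3x_4,x_3x_5,x_4x_5)$, after which the explicit witness $(x_3,x_4,x_5)=(I^2:x_1^2x_3x_4x_5)\in\Ass(I^2)\setminus\Ass(I)$ finishes (equivalently, the paper's own example $\astab((x_3x_4,x_3x_5,x_4x_5))=2$ does the job via your disjoint-support formula). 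The structural step you flag as the main obstacle is genuine but available: it is precisely the lemma from \cite{HV} underlying Theorem~1.1, namely that for a full-supported matroidal ideal with $\gcd(I)=1$ one has $\{x_i,x_j\}\in E(\Gamma_I)$ if and only if $i,j$ lie in a common circuit (extend $C\setminus\{i\}$ to a basis and exchange along the fundamental circuit $C$), so the components of $\Gamma_I$ are the matroid components and $I$ factors as the product of the component ideals. Two points worth writing out: in the $s=2$ enumeration, note that a singleton component is forced to be a loop or a coloop (giving the $\{2,3\}$ split) and that a rank-two matroid on two elements consists of two coloops; and for $\Ass(R/(A\cap B))=\Ass(R/A)\cup\Ass(R/B)$ with disjoint supports, get $\subseteq$ from $0\to R/(A\cap B)\to R/A\oplus R/B$ and $\supseteq$ from $(AB:uv)=(A:u)(B:v)$ for monomials $u,v$ in the respective variable sets. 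Your approach buys more than the paper's: it identifies the unique (up to relabeling) candidate ideal together with a concrete embedded prime of its square, and this is exactly the ideal that resurfaces in the final case of the proof of Theorem~\ref{T4}.
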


\begin{proof}
By contrary, we assume that $\astab(I)=1$. By Proposition \ref{P2}, for all $i=1,...,5$, $\astab(I)=\astab(I[i])=\dstab(I[i])=1$.
Since $\gcd(I[i])=1$ and $\dstab(I[i])=1$, we conclude that, for $i=1,2,...,5$, $I[i]$ is a transversal matroidal ideal in polynomial ring with four variables and $\height(\frak{p})=2$ for all $\frak{p}\in\Ass(I)$. Let $I[1]={\frak{p}_1}{\frak{p}_2}, I[2]=\frak{p}_3\frak{p}_4, I[3]=\frak{p}_5\frak{p}_6, I[4]=\frak{p}_7\frak{p}_8$ and $I[5]=\frak{p}_9\frak{p}_{10}$. Therefore we can assume that $\frak{p}_1=\frak{p}_3=\frak{p}_5=(x_4,x_5)$ and $\frak{p}_2=\frak{p}_7=\frak{p}_9=(x_2,x_3)$. Thus $I[2][4]=(x_1,x_3)$ and $I[4][2]=(x_1,x_5)$ and this is a contradiction. Hence $\astab(I)\neq 1$, as required.
\end{proof}

\begin{Theorem}\label{T4}
Let $n=5$ and $I\subset R$ be a matroidal ideal of degree $d$. Then $\astab(I)=\dstab(I)$.
\end{Theorem}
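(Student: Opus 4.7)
By Lemma \ref{L3} I may assume $\gcd(I)=1$ and that $I$ is full-supported, so $d\in\{1,2,3,4,5\}$. The cases $d=1$ (where $I$ is a monomial prime) and $d=5$ (where $I=(x_1x_2x_3x_4x_5)$ is principal) are trivial, giving $\astab(I)=\dstab(I)=1$. For $d=2$, Corollary \ref{C1} applies directly. For $d=4=n-1$, Lemma \ref{L4} identifies $I$ with the squarefree Veronese ideal $I_{4;1,1,1,1,1}$, and then \cite[Corollary 5.7]{HRV} yields the conclusion. The outline at this point exactly mirrors the proof of Theorem \ref{T2}.

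The only substantial case is $d=3$, and by Lemma \ref{L5} we already know $\astab(I)\ge 2$. I would split according to whether $\mm\in\Ass^\infty(I)$. Assume first $\mm\notin\Ass^\infty(I)$. Then Proposition \ref{P2} gives $\astab(I)=\max_{i}\astab(I[i])$, and by Theorem \ref{T2} each matroidal $I[i]$ (in at most four variables, with $\gcd(I[i])=1$) satisfies $\astab(I[i])=\dstab(I[i])$. Following the template of Theorem \ref{T3}, the condition $\mm\notin\Ass^\infty(I)$ yields $I=\bigcap_{i=1}^5 I[i]$, and each $I[i]$ turns out to be of Veronese type (the Cohen--Macaulay ones by \cite[Theorem 4.2]{HH2}, the remaining ones by direct inspection of rank at most $3$ matroids on at most four elements). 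Imposing the compatibility $I[i][j]=I[j][i]$ for all $i\neq j$, exactly as in the final step of Theorem \ref{T3}, cuts down the possibilities enough to read off $\dstab(I)=\max_i\dstab(I[i])=\astab(I)$.

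Suppose instead $\mm\in\Ass^\infty(I)$. Then the lemma immediately preceding Example \ref{E} shows that $\dstab(I)\le\astab(I)$ and identifies $\dstab(I)$ with $\min\{k:\mm\in\Ass(I^k)\}$. To obtain the reverse inequality I would show that once $\mm$ becomes an associated prime, no non-maximal prime can appear for the first time at a later power; by Remark \ref{R1}(e) this reduces to bounding each $\astab(I[i])$ by $\dstab(I)$. Since Proposition \ref{P1} gives $\astab(I[i])\le\astab(I)$ and Theorem \ref{T2} gives $\astab(I[i])=\dstab(I[i])$, what remains is a finite combinatorial check on how the non-maximal associated primes of $I^k$ evolve in the powers before $\mm$ appears, using the classification of rank $3$ matroids on $5$ elements obtained above. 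I expect this $\mm\in\Ass^\infty$ subcase to be the main obstacle, because—unlike in Theorem \ref{T3}—the depth $\depth R/I^k$ is not forced to be bounded below by $1$, so $\dstab(I)$ cannot be pinned down as cleanly and one really has to track the order in which each non-maximal prime enters $\Ass(I^k)$.
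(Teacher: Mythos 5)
Your reduction to $d=3$ (via Lemma \ref{L3}, Corollary \ref{C1} for $d=2$, and Lemma \ref{L4} for $d=4$) matches the paper, but both of your $d=3$ subcases are left as sketches with real gaps, and in one of them you have misjudged where the difficulty lies. For $\mm\in\Ass^\infty(I)$ you stop at ``a finite combinatorial check on how the non-maximal primes evolve,'' but no such check is needed: each $I[i]$ is a matroidal ideal of degree $2$ in at most four variables, so Corollary \ref{C1} gives $\astab(I[i])=\dstab(I[i])\le 2$, hence by Remark \ref{R1}(e) the non-maximal part of $\Ass(I^t)$ stabilizes by $t=2$; on the other hand $\depth R/I=d-1=2$ (Chiang-Hsieh), so $\mm\notin\Ass(I)$ and the unlabelled lemma preceding Example \ref{E} gives $\dstab(I)=\min\{k:\mm\in\Ass(I^k)\}\ge 2$. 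Therefore $\astab(I)=\max\{\max_i\astab(I[i]),\,\dstab(I)\}=\dstab(I)$, which is exactly the paper's two-line argument. The subcase you flag as ``the main obstacle'' is in fact the easy one.

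The genuine gap is in the subcase $\mm\notin\Ass^\infty(I)$: you assert that imposing $I[i][j]=I[j][i]$ lets you ``read off $\dstab(I)=\max_i\dstab(I[i])$,'' but you supply no mechanism relating $\dstab(I)$ to the $\dstab(I[i])$, and no such identity holds in general (the paper's example $I=(x_1x_2,x_1x_3,x_2x_3)$ has $\dstab(I)=2$ while every $\dstab(I[i])=1$). The paper's actual mechanism has two ingredients you do not reproduce: first, Lemma \ref{L5} rules out $\astab(I)=1$, which forces $(x_1,\dots,\widehat{x_i},\dots,x_5)\in\Ass^\infty(I[i])$ for the relevant $i$, hence $\ell(I[i])=4=\ell(I)$ by \cite[Corollary 1.6]{HQ}, and then \cite[Theorem 3.1]{CHHKTT} transfers the depth stabilization index from $I[i]$ to $I$, giving $\dstab(I)=\dstab(I[i])=2=\astab(I)$; second, when $I[i]$ lives in only three variables, the paper uses the connected components of the linear relation graph $\Gamma_I$ to pin $I$ down explicitly as $(x_1,x_2)(x_3x_4,x_3x_5,x_4x_5)$ and concludes directly. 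Without a substitute for \cite[Theorem 3.1]{CHHKTT} (or for the explicit determination of $I$), your claim that the compatibility conditions ``cut down the possibilities enough'' is not a proof.
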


\begin{proof}
We can assume that $\gcd(I)=1$ and $I$ is a full-supported matroidal ideal. If $d\neq 2,3,4$, then there is nothing to prove. For $d=4$ and $d=2$ the result follows by Lemma \ref{L4} and Theorem \ref{T1}, respectively. Thus we can assume that $d=3$. Let ${\mm}\in\Ass^{\infty}(I)$. Then, by Remark \ref{R1}(e) and Corollary \ref{C1}, $\Ass^{\infty}(I)=\Ass(I)\cup\{\mm\}$ or
$\Ass^{\infty}(I)=\Ass(I^2)\cup\{\mm\}$. Hence $\astab(I)=\dstab(I)$.
Now, let ${\mm}\notin\Ass^{\infty}(I)$. Then, by Proposition \ref{P2}, $\astab(I)=\astab(I[i])=\dstab(I[i])$ for some integer $i=1,2,...,5$. Let $I[i]$ be a full-supported ideal in $k[x_1,...,\widehat{x_i},...,x_5]$. If $(x_1,...,\widehat{x_i},..,x_5)\notin\Ass^{\infty}(I)$, then by Corollary \ref{C1}, $\astab(I[i])=1$ and this is a contradiction with Lemma \ref{L5}. Therefore $(x_1,...,\widehat{x_i},..,x_5)\in\Ass^{\infty}(I)$ and so by \cite[Corollary 1.6]{HQ} we have $\ell(I[i])=4$.
 It is well known that $\ell(I[i])\leq \ell(I)$ and so $\ell(I[i])=\ell(I)=4$. Since $I[i]$ is a matroidal ideal of degree 2 and $(x_1,...,\widehat{x_i},..,x_5)\in\Ass^{\infty}(I)$, by Corollary \ref{C1} we have $\dstab(I[i])=2$. Therefore by using \cite[Theorem 3.1]{CHHKTT}
it follows $\dstab(I)=\dstab(I[i])$ and so $\astab(I)=\dstab(I)$. Let $I[i]$ be a matroidal ideal in polynomial ring with three variables. Since ${\mm}\notin\Ass^{\infty}(I)$ and $\dstab(I)\neq 1$, we have $s=2$; where $s$ is the number of connected components of $\Gamma_I$. Therefore we have $I\subseteq{\fp}_1\cap{\fp}_2$ and so we can assume that
${\fp}_1=(x_1,x_2)$ and ${\fp}_2=(x_3,x_4,x_5)$. Thus $(I:x_3x_4)=(I:x_3x_5)=(I:x_4x_5)={\fp}_1$. Since $\Gamma_I$ is disconnected with connected components $\Gamma_1$ and $\Gamma_2$ in which ${\fp}_1$ and ${\fp}_2$ are the monomial prime ideals generated by the set of vertices $\Gamma_1$ and $\Gamma_2$, respectively, we have $x_1x_2x_i\notin I$ for $i=3,4,5$ and $x_3x_4x_5\notin I$. It therefore follows $I=(x_1x_3x_4,x_1x_4x_5,x_1x_3x_5,x_2x_3x_4,x_2x_4x_5,x_2x_3x_5)$. Thus $\dstab(I)=\astab(I)$, as required.
\end{proof}

\subsection*{Acknowledgements}
We would like to thank deeply grateful to the referee for the careful reading
of the manuscript and the helpful suggestions.



\begin{thebibliography}{99}
\bibitem{AB} J. Abbott and A. M. Bigatti, {\it a C++ library for doing Computations in Commutative Algebra}, Available at
    http://cocoa.dima.unige.it/cocoalib.
\bibitem{BH} S. Bandari and J. Herzog, {\it Monomial localizations and polymatroidal ideals}, Eur. J. Comb., {\bf 34}(2013), 752-763.
\bibitem{BHR} S. Bayati, J. Herzog and G. Rinaldo, {\it A routine to compute the stable set of associated prime ideals of amonomial ideal}, Available at
    http://www.giancarlorinaldo.it/stableset.html.
\bibitem{B1} M. Brodmann, {\it Asymptotic stability of $\Ass(M/{I^nM})$}, Proc. Am. Math. Soc., {\bf 74}(1979), 16-18.
\bibitem{B} M. Brodmann, {\it The asymptotic nature of the analytic spread}, Math. Proc. Cambridge Philos. Soc., {\bf 86}(1979), 35-39.
\bibitem{BH1} W. Bruns and J. Herzog, {\it Cohen-Macaulay rings}, Cambridge University Press, Cambridge, UK, (1998).
\bibitem{CHHKTT} G. Caviglia, H. T. Ha, J. Herzog, M. Kummini, N. Terai and N. V. Trung, {\it Depth and regularity modulo and principal ideal},
J. Algebr Comb (2018). https://doi.org/10.1007/s10801-018-0811-9.
\bibitem{GS}D. R. Grayson and M. E. Stillman, {\it Macaulay 2, a software system for research in algebraic geometry}, Available at
    {http://www.math.uiuc.edu/Macaulay2/}.
\bibitem{C} H. J. Chiang-Hsieh, {\it Some arithmetic properties of matroidal ideals}, Comm. Algebra, {\bf 38}(2010), 944-952.
\bibitem{CH} A. Conca and J. Herzog, {\it Castelnuovo-Mumford regularity of products of ideals}, Collect. Math., {\bf 54}(2003), 137-152.
\bibitem{EH} D. Eisenbud and C. Huneke, {\it Cohen-Macaulay Rees algebra and their specialization}, J. Algebra, {\bf 81}(1983), 202-224.
\bibitem{HH3} J. Herzog and T. Hibi,{\it Bounding the socles of powers of squarefree monomial ideals}, Commutative algebra and noncommutative algebraic geometry, {\bf 68}(2015), 223-229.
\bibitem{HH1} J. Herzog and T. Hibi, {\it Monomial ideals}, Grad.Texts Math., vol.260, Springer-Verlag London, Ltd., London, (2011).
\bibitem{HH2} J. Herzog and T. Hibi, {\it Cohen-Macaulay polymatroidal ideals}, Eur. J. Comb., {\bf 27}(2006), 513-517.
\bibitem{HH} J. Herzog and T. Hibi, {\it The depth of powers of an ideal}, J. Algebra, {\bf 291}(2005), 534-550.
\bibitem{HH4} J. Herzog and T. Hibi, {\it Discrete polymatroids}, J. Algebr Comb., {\bf 16}(2002), 239-268.
\bibitem{HHV} J. Herzog, T. Hibi and M. Vladoiu, {\it Ideals of fiber type and polymatroids}, Osaka J. Math., {\bf 42}(2005), 807-829.
\bibitem{HM} J. Herzog and A. Mafi, {\it Stability properties of powers of ideals in regular local rings of small dimension}, Pacific J. Math., {\bf 295}(2018), 31-41.
\bibitem{HQ} J. Herzog and A. Qureshi, {\it Persistence and stability properties of powers of ideals}, J. Pure and Applied Algebra, {\bf 219}(2015),
    530-542.
\bibitem{HRV} J. Herzog, A. Rauf and M. Vladoiu, {\it The stable set of associated prime ideals of a polymatroidal ideal}, J. Algebr Comb., {\bf 37}(2013), 289-312.
\bibitem{HT} J. Herzog and Y. Takayama, {\it Resolutions by mapping cones}, Homology Homotopy Appl., {\bf 4}(2002), 277-294.
\bibitem{HV} J. Herzog and M. Vladoiu, {\it Squarefree monomial ideals with constant depth function}, J. Pure and Applied Algebra, {\bf 217}(2013),
    1764-1772.
\bibitem{HS} C. Huneke  and I. Swanson. {\it Integral closure of ideals, rings, and modules},  vol. {\bf 13},  Cambridge University Press, 2006.
\bibitem{KR} D. Katz and L. J. Ratliff, JR., {\it On the prime divisors of IJ when I is integrally closed}, Arch. Math., {\bf 51}(1988), 520-522.
\bibitem{T} T. N. Trung, {\it Stability of associated primes of integral closures of monomial ideals}, J. Comb. Theory Ser. A {\bf 116}(2009), 44-54.
\bibitem{V} R. H. Villarreal, {\it Monomial Algebras}, Marcel Dekker, Inc, New York, (2001).
\end{thebibliography}
\end{document}